\newtheorem{theorem}{Theorem}[section]
\newtheorem*{theorem*}{Theorem}
\newtheorem{lemma}[theorem]{Lemma}
\newtheorem{proposition}[theorem]{Proposition}
\newtheorem{thmx}{Theorem}
\newtheorem{thmxx}{Theorem}
\theoremstyle{definition}
\newtheoremstyle{fact}
  {.5em}
  {.5em}
  {\itshape}
  {}
  {\itshape}
  {.}
  {.5em}
  {\thmname{#1}\thmnumber{ #2}\thmnote{ (#3)}}
\theoremstyle{fact}
\newtheorem{fact}{Fact}
\begin{document}

\title{Scaling functions for graph directed Markov systems}

\author{Daniel Ingebretson}

\begin{abstract}
We introduce the scaling function associated to a graph directed Markov system, and show that it is a H\"{o}lder continuous function of the dual symbolic Cantor set. 
With some natural separation and regularity conditions, each such system has a unique Cantor limit set in Euclidean space.
We prove that the scaling function is a complete invariant of $ C^{1+\alpha} $ conjugacy between limit sets.
We conclude by relating the scaling function to the pressure, and discussing several applications to the dimension theory of limit sets.
\end{abstract}

\maketitle 

\section{Introduction}

Graph directed Markov systems are general dynamical systems used to model many more specific systems, including conformal expanding repellers, iterated function systems, Kleinian groups, and continued fractions.
The general construction consists of a directed multigraph, a sequence of metric spaces indexed by the vertex set, and a sequence of functions indexed by the edge set.
The graph determines which functions may be composed; along any oriented path in the graph we have a corresponding composition.
If the functions are all injective contractions, a fixed point argument shows that the system has a unique invariant limit set, and there is a bijective coding map between the limit set and a symbolic space of admissible words.

To study the dynamics and geometry of limit sets, it is necessary to make some assumptions on such a system. 
In this work, we make the standard assumptions that the spaces are Euclidean, the maps are conformal, and the open set condition is satisfied.
To prove the main theorem however, we make the additional assumptions that the derivatives are bounded away from zero, and the open set condition is upgraded to a strong separation condition.
These imply that the limit set is a Cantor set in Euclidean space, the intersection over all $ n \in \mathbb{N} $ of nested sets coded by admissible words of length $ n $.

We consider two natural notions of equivalence between such limit Cantor sets.
We say two such sets are geometrically equivalent if their contraction rates are equal at arbitrarily fine scales.
They are differentiably equivalent of class $ C^{1+\alpha} $ if there is a map of the ambient Euclidean space taking one set into the other.
Each equivalence class is a differential structure on the Cantor set.

Motivated by the phenomenon of Feigenbaum universality, Sullivan \cite{Sul} studied differential structures on Cantor attractors of unimodal maps of the interval.
To find invariants, he introduced the ratio geometry and scaling function for these Cantor sets.
The dual Cantor set was originally defined to study smoothness properties of the shift map on the underlying symbolic space  \cite{Prz1}.
Sullivan claimed that the differential structure was completely determined by exponential geometric structure, and that the limiting scaling function is H\"{o}lder when the shift map is $ C^{1+\alpha} $.
Proofs of Sullivan's results appeared later \cite{Prz1}, \cite{Prz2}, and \cite{Bed2}, for homeomorphic embeddings of $ \{0,1\}^{\mathbb{N}} $ into the interval.
Related questions were also answered there, including conditions on the scaling function that imply higher smoothness of the shift map.

We begin by defining the ratio geometry sequence on the dual symbolic set for a graph directed Markov system.
Using distortion estimates, we show that this sequence limits to a well-defined H\"{o}lder scaling function.
Using the ratio geometry sequence we define geometric equivalence, and generalize Sullivan's theorem to limit sets of graph directed systems.

\begin{thmxx}
Two limit sets of conformal graph directed Markov systems are $ C^{1+\alpha} $ equivalent for some $ \alpha > 0 $, if and only if they are geometrically equivalent.
\end{thmxx}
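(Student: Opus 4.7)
The plan is to handle the two implications separately. The forward direction (smooth equivalence implies geometric equivalence) is a standard distortion argument, while the reverse direction is the substantive content and constructs an ambient $C^{1+\alpha}$ extension of the natural symbolic bijection, using the scaling function as prescribed derivative data.

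For the forward implication, suppose $h$ is a $C^{1+\alpha}$ diffeomorphism of a neighborhood of the ambient space with $h(K_1)=K_2$. Given an admissible infinite word $\omega$, the image under $h$ of the $n$-th cylinder around $\pi_1(\omega)$ has diameter equal to $|Dh(\pi_1(\omega))|$ times the diameter of the cylinder, up to a multiplicative error $1+O(\mathrm{diam}(\text{cylinder})^\alpha)$, by the mean value theorem and $\alpha$-H\"older continuity of $Dh$. By bounded distortion, this image is comparable to the corresponding cylinder in the second system, so when one forms the ratios defining the ratio geometry sequence, the prefactor $|Dh(\pi_1(\omega))|$ cancels and the two systems' ratio geometries differ by an error that decays exponentially in $n$. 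Passing to the limit yields a common scaling function.

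For the reverse implication, begin with the bijection $h\colon K_1\to K_2$ defined by $h(\pi_1(\omega))=\pi_2(\omega)$, well defined by strong separation. First, define a candidate derivative $\rho\colon K_1\to(0,\infty)$ as the limit of ratios of diameters of corresponding $n$-th cylinders; its existence, positivity, and H\"older continuity follow from the H\"older regularity of the common scaling function combined with the bounded distortion estimates. Second, verify the Whitney-type Taylor estimate $h(y)-h(x)=\rho(x)(y-x)+o(|y-x|^{1+\beta})$ uniformly on $K_1$ for some $\beta>0$. Third, apply Whitney's extension theorem to $(h,\rho)$ to produce a $C^{1+\beta}$ map defined on a neighborhood of $K_1$; positivity and H\"older continuity of $\rho$ make this extension a local diffeomorphism near $K_1$, yielding the desired ambient equivalence.

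The main obstacle will be the Whitney compatibility estimate in step two. Translating the symbolic equality of scaling functions into a pointwise Taylor bound on the limit set requires careful bookkeeping: for nearby points $x=\pi_1(\omega)$ and $y=\pi_1(\omega')$ with longest common prefix $\omega|_n$, strong separation together with bounded distortion make $|y-x|$ comparable to the diameter of the $n$-th cylinder around $x$, and the H\"older rate of convergence of the ratio geometries to the scaling function supplies the $o$-estimate with a quantitative exponent $\beta$. The hypothesis that derivatives are bounded away from zero is essential here, since it prevents degenerate contraction ratios along long words from eroding the exponent $\beta$ and thereby obstructing the Whitney extension.
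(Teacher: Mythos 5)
Your overall architecture coincides with the paper's. The forward direction is argued the same way: the mean value theorem together with $\alpha$-H\"older continuity of the derivative of the conjugacy makes the two ratio geometries agree up to a factor $e^{\pm C\lambda^{n\alpha}}$, exactly as in Equation \ref{rnsn} of the paper. The reverse direction also follows the paper's route: set $\Phi=\pi_S\circ\pi_T^{-1}$, define the derivative at $\pi_T(\omega)$ as a limit of ratios of diameters of corresponding cylinders, prove H\"older continuity of this candidate derivative by combining the exponential rate in the geometric equivalence with strong separation and exponential geometry (this is the role of Lemma \ref{sep}, which converts the symbolic bound $Ct^{N(\omega,\tau)}$ into $C'|x-y|^{\alpha}$), and invoke a Whitney extension theorem. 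Your identification of where conditions (b') and (f) enter is the same as the paper's.

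There is, however, one point where your plan diverges and where I do not think it can be carried out as written. You take the candidate derivative to be a positive scalar $\rho(x)$ and propose to verify $h(y)-h(x)=\rho(x)(y-x)+o(|y-x|^{1+\beta})$. In $\mathbb{R}^d$ with $d\geq 2$ this forces $Dh(x)=\rho(x)\,\mathrm{Id}$, and nothing in the hypothesis supports that: geometric equivalence is a statement about ratios of \emph{diameters} only, so it carries no directional information, and the conjugacy between two conformal systems will in general rotate as well as dilate. The paper avoids committing to a scalar by assembling a full Jacobian matrix of partial derivatives from componentwise diameter ratios (Equations \ref{xomega}--\ref{partial}), and your argument needs an analogous matrix-valued candidate before Whitney can be applied. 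Relatedly, your step two --- the Taylor compatibility estimate --- is correctly identified as the crux but is only asserted; the mechanism that actually closes the bookkeeping in the paper is the cross-ratio identity of Equation \ref{crossrat} together with the telescoped bound $e^{-Ct^{n}}\leq a_{n+m}/a_{n}\leq e^{Ct^{n}}$ of Equation \ref{anm}, evaluated at $n=N(\omega,\tau)$ where $a_N=1$. Without some version of that telescoping step, the exponential equivalence of the ratio geometries gives you control of each single ratio $r_n/s_n$ but not of the accumulated quotient of diameters at two distinct points, which is what the H\"older estimate on the candidate derivative actually requires.
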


In the second part of the paper, we study the pressure and its relation to the Hausdorff dimension of the limit set.
There is a natural generalization of Bowen's equation for graph directed Markov systems, which expresses the Hausdorff dimension of the limit set as the zero of the pressure function.
After recalling the construction of the pressure for graph directed systems, we show that it can be expressed entirely in terms of the scaling function on the dual Cantor set.

\begin{thmxx}
Let $ S $ be a conformal graph directed Markov system, with scaling function $ r $ and pressure $ P $. 
Then
$$
P(t) = \lim_{n \to \infty} \frac{1}{n} \log \sum_{\omega \in \widetilde{\Sigma}^n} \prod_{k=0}^{n-1} r( \ldots, \omega_n, \ldots, \omega_{k+1})^t.
$$
\end{thmxx}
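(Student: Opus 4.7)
The strategy is to compare the sum
\begin{equation*}
Z_n(t) := \sum_{\omega \in \widetilde{\Sigma}^n} \prod_{k=0}^{n-1} r(\ldots, \omega_n, \ldots, \omega_{k+1})^t
\end{equation*}
appearing in the statement with the Bowen partition sum $P_n(t) := \sum_{\omega \in \Sigma^n} \|\phi_\omega'\|^t$ that defines $P(t) = \lim_n n^{-1} \log P_n(t)$, and to show that the two are comparable up to a multiplicative constant independent of $n$. Once this is in hand, applying $n^{-1}\log$ and letting $n \to \infty$ absorbs the constant and yields the identity; the bijection $\Sigma^n \leftrightarrow \widetilde{\Sigma}^n$ given by word reversal handles the reindexing.

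The main step is a telescoping identity. Writing $\phi_\omega = \phi_{\omega_1} \circ \cdots \circ \phi_{\omega_n}$ and applying the chain rule expresses $\|\phi_\omega'\|$ as a product of derivatives of the generators evaluated at successive images of the tails. Conformality together with bounded distortion replaces each factor by the ratio of diameters of cylinders associated to the words $\omega_{k+1}\omega_{k+2}\cdots\omega_n$ and $\omega_{k+2}\cdots\omega_n$, with a uniform multiplicative error. These ratios are precisely the finite-depth approximants to $r$ evaluated at the dual word $(\ldots, \omega_n, \ldots, \omega_{k+1})$; the H\"older continuity of $r$ established earlier in the paper gives a geometric bound
\begin{equation*}
\bigl| \log (\text{depth-}k\text{ ratio}) - \log r(\ldots, \omega_n, \ldots, \omega_{k+1}) \bigr| \leq C \theta^{n-k}
\end{equation*}
for some $\theta \in (0,1)$ depending only on the system.

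Multiplying these comparisons across $k = 0, \ldots, n-1$ and raising to the power $t$ yields
\begin{equation*}
\|\phi_\omega'\|^t \asymp \prod_{k=0}^{n-1} r(\ldots, \omega_n, \ldots, \omega_{k+1})^t,
\end{equation*}
with constants independent of both $n$ and $\omega$, the point being that $\sum_k \theta^{n-k}$ and the accumulated distortion error are each uniformly bounded in $n$. Summing over admissible words gives $P_n(t) \asymp Z_n(t)$, and the theorem follows.

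The principal obstacle is sustaining the uniform-in-$n$ comparison across the two independent sources of multiplicative error: the Koebe-type distortion of compositions of length $n$, and the H\"older approximation of $r$ by its finite-word substitutes at each position. Both must decay geometrically in $n-k$ for the accumulated product error to remain bounded, which is exactly what the conformal GDMS hypotheses and the H\"older property of $r$ provide; granted these, the comparison goes through as planned.
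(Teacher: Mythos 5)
Your proposal is correct and follows essentially the same route as the paper: both compare $\|D\phi_\omega\|^t$ to $\mathrm{diam}(\Delta_\omega)^t$ up to uniform constants, telescope the diameter into the ratio geometry product, and replace each finite-depth ratio by the scaling function with an error of order $\lambda^{(n-k)\alpha}$ whose product over $k$ is uniformly bounded by a geometric series. The only quibble is attribution: the bound $|\log r_{n-k} - \log r| \leq C\theta^{n-k}$ comes from the exponential convergence in Proposition \ref{dualrat} (letting $m \to \infty$), not from the H\"older continuity of $r$ per se, though both stem from the same distortion estimate.
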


In the case of similarity transformations, there is a classical dimension theory in terms of the similarity coefficients that goes back to Moran and Hutchinson (see \cite{Pes}).
If the maps in the graph directed system are similarities, the scaling function reduces to the similarity coefficients.
In this way, Theorem B and Bowen's equation can be viewed as a generalization of this theory to graph directed Markov systems.
In the conclusion of the paper, we illustrate this generalization by showing that in the cases of the infinitely generated similarities of \cite{MU1} and the graph directed constructions of \cite{MW}, Theorem B immediately reduces to the simple dimension formulas obtained there.

\section{Graph directed Markov systems} 

First we fix some notation. 
If $ A $ is a bounded metric space we denote its diameter by $ \text{diam}(A) $.
If $ x \in \mathbb{R}^d $, its Euclidean norm is denoted by $ |x| $, and if $ f : \mathbb{R}^d \rightarrow \mathbb{R}^d $ is $ C^1 $, the norm of its derivative at $ x \in \mathbb{R}^d $ is $ |Df(x)| $.
Finally, if $ f : X \rightarrow \mathbb{R}^d $ is a function on a compact metric space $ X $, we set $ \| f\| = \sup \{ |f(x)| : x \in X \} $, the usual uniform norm.

\subsection{Preliminaries: directed multigraphs}
The following notation is standard (see \cite{MU2}).
Consider a directed multigraph $ (V,E) $ with a finite vertex set $ V $ and a countable edge set $ E $. 
The functions $ i,t : E \rightarrow V $ are defined as follows: $ i(e) $ is the initial vertex of edge $ e $, and $ t(e) $ is its terminal vertex.
Let $ A : E \times E \rightarrow \{0,1\} $ be the edge incidence matrix of this graph, so that $ A_{e_1 e_2} = 1 $ if $ t(e_1) = i(e_2) $, and otherwise $ A_{e_1 e_2} = 0 $.
We denote the right-infinite words $ \omega = (\omega_1, \omega_2, \ldots ) $ on the alphabet $ E $ simply by $ E^{\infty} $.
The matrix $ A $ defines the set of infinite admissible words
$$
\Sigma_A = \{ \omega \in E^{\infty} : A_{\omega_j \omega_{j+1}} = 1 \text{ for all } j \geq 1 \},
$$
as well as each set of finite admissible words
$$
\Sigma_A^n = \{ \omega \in E^n : A_{\omega_j \omega_{j+1}} = 1 \text{ for all } 1 \leq j \leq n-1 \}
$$
of length $ n \geq 1 $.
Each word in $ \Sigma_A $ or $ \Sigma_A^n $ represents the symbolic dynamics of the orbit of an infinite or finite walk on the directed graph.
When the matrix $ A $ is fixed, we will refer to $ \Sigma_A $ and $ \Sigma_A^n $ simply by $ \Sigma $ and $ \Sigma^n $, respectively.
If $ \omega \in \Sigma^n $, we say that $ |\omega|=n $ is the \textit{word length} of $ \omega $.
If $ \omega = (\omega_1, \omega_2, \ldots) \in \Sigma $, we denote by
$$
\omega |_n = (\omega_1, \ldots, \omega_n) \in \Sigma^n $$
its right-truncation to length $ n $.

If $ \omega, \tau \in \Sigma $, we define the integer $ N = N(\omega, \tau) \geq 0 $ as the length of their longest common initial subword:
\begin{equation}
\label{N}
N(\omega, \tau) = \max \{ n : \omega_j = \tau_i \text{ for all } 1 \leq j \leq n \}.
\end{equation}
On $ E $ we take the discrete topology, the product topology on $ E^{\infty} $, and the subspace topology on $ \Sigma \subset E^{\infty} $.
If $ 0 < t < 1 $ is a constant, the topology induced by the metric $ \rho(\cdot, \cdot) = t^{N(\cdot, \cdot)} $ is compatible with this topology, and $ \Sigma $ is a Cantor set.

\subsection{Transpose graphs and the dual}
Now consider the transpose graph of $ (V,E) $, i.e. the graph with the same vertex and edge sets but directed by the transpose $ A^T $ of the incidence matrix, so that the direction of the edges are reversed.
Let $ E^{-\infty} $ be the space of left-infinite words $ \omega = (\ldots, \omega_2, \omega_1) $ on the alphabet $ E $.
Define
$$
\widetilde{\Sigma}_A = \{ \omega \in E^{-\infty} : A_{\omega_{j+1} \omega_j} = 1 \text{ for all } j \geq 1 \},
$$
and similarly define $ \widetilde{\Sigma}_A^n $.
Because we index the letters $ \omega_j $ from right to left, and use the transpose $ A^T $, note that $ \widetilde{\Sigma}_A^n = \Sigma_{A}^n $ for any $ n \geq 1 $, but $ \widetilde{\Sigma}_A $ is distinct from $ \Sigma_{A} $.
As before, when the matrix $ A $ is fixed, we will suppress the reference and simply write $ \widetilde{\Sigma} $ and $ \widetilde{\Sigma}^n $.
If $ \omega \in \widetilde{\Sigma}^n $, its word length $ |\omega| $ is $ n $, and if $ \omega = (\ldots, \omega_2, \omega_1) \in \widetilde{\Sigma} $, the notation for left-truncation is
$$
\omega |_n = (\omega_n, \ldots, \omega_1) \in \widetilde{\Sigma}^n.
$$

We can metrize $ \widetilde{\Sigma} $ in an analogous way to $ \Sigma $; for $ \omega, \tau \in \widetilde{\Sigma} $ let $ N = N(\omega,\tau) $ be the length of their longest common initial subword, beginning from the right. 
Because the index increases from right to left in our notation of dual words, the definition of $ N $ in Equation \ref{N} is valid here as well.
Then for any $ 0 < t < 1 $, the metric $ \widetilde{\rho}(\cdot, \cdot) = t^{N(\cdot, \cdot)} $ induces the product topology on $ \widetilde{\Sigma} $, in which $ \widetilde{\Sigma} $ is a Cantor set, called the \textit{dual Cantor set} to $ \Sigma $.

\subsection{Graph directed Markov systems}
Fix a constant $ 0 < \lambda < 1 $.
Let $ \{ X_v \}_{v \in V} $ be a collection of non-empty compact metric spaces indexed by the vertex set $ V $, and assume that for each edge $ e \in E $ we have an injective contraction map $ \phi_e : X_{t(e)} \rightarrow X_{i(e)} $ with Lipschitz constant $ \leq \lambda $.
The set 
$$ 
S = \{ \phi_e : X_{t(e)} \rightarrow X_{i(e)} \}_{e \in E} 
$$ 
is called a \textit{graph directed Markov system} or GDMS.

\subsubsection{Sets coded by $ \Sigma $}
Consider a finite word $ (\omega_1, \ldots, \omega_n) \in \Sigma^n $.
For each $ 1 \leq j \leq n-1 $, we have $ A_{\omega_j \omega_{j+1}} = 1 $, so that $ t(\omega_j) = i(\omega_{j+1}) $ and the directed graph contains the following subgraph:
$$
X_{i(\omega_j)} \longrightarrow \left( X_{t(\omega_j)} = X_{i(\omega_{j+1})} \right) \longrightarrow X_{t(\omega_{j+1})},
$$
and thus the composition 
$$ 
\phi_{\omega_j} \circ \phi_{\omega_{j+1}} : X_{t(\omega_{j+1})} \rightarrow X_{i(\omega_j)} 
$$
is well-defined.
We can now define the iterated composition map 
$$ 
\phi_{\omega_1, \ldots, \omega_n} =  \phi_{\omega_1} \circ \cdots \circ \phi_{\omega_n} : X_{t(\omega_n)} \rightarrow X_{i(\omega_1)}.
$$
and denote its image by
$$ 
\Delta_{\omega_1, \ldots, \omega_n} = \phi_{\omega_1, \ldots, \omega_n}\left( X_{t(\omega_n)} \right). 
$$
For each $ \omega \in \Sigma $, we have a nesting property $ \Delta_{\omega |_n} \supset \Delta_{\omega |_{n+1}} $, so that $ \bigcap_{n \geq 1} \Delta_{\omega |_n} \neq \emptyset $.
Furthermore, because the Lipschitz constant of each map $ \phi_e $ is $ \leq \lambda $, then
\begin{equation}
\label{contract}
\text{diam}(\Delta_{\omega |_n}) \leq \lambda^n \text{diam}(X_{t(\omega_n)}) \leq \lambda^n \max \{ \text{diam}(X_v) : v \in V \}.
\end{equation}
Since $ 0 < \lambda < 1 $, we know that $ \bigcap_{n \geq 1} \Delta_{\omega |_n} $ is a singleton, which defines a coding map 
$$ 
\pi : \Sigma \rightarrow \bigcup_{v \in V} X_v, \qquad \pi(\omega) = \bigcap_{n \geq 1} \Delta_{\omega |_n}.
$$
The set $ J_S = \pi(\Sigma) $ is called the \textit{limit set} of the GDMS $ S $.
When the GDMS $ S $ is fixed, we simply write $ J = J_S $.

\subsubsection{Sets coded by $ \widetilde{\Sigma} $}
Now consider a finite dual word $ (\omega_n, \ldots, \omega_1) \in \widetilde{\Sigma}^n $.
For each $ 1 \leq j \leq n-1 $, we have $ A_{\omega_{j+1} \omega_j} = 1 $, so that $ t(\omega_{j+1}) = i(\omega_j) $ and the transpose graph contains the subgraph:
$$
X_{i(\omega_{j+1})} \longrightarrow \left( X_{t(\omega_{j+1})} = X_{i(\omega_j)} \right) \longrightarrow X_{t(\omega_j)},
$$
and the composition
$$
\phi_{\omega_{j+1}} \circ \phi_{\omega_j} : X_{t(\omega_j)} \rightarrow X_{i(\omega_{j+1})} 
$$
is well-defined.
This allows us to define the composition map
$$ 
\phi_{\omega_n, \ldots, \omega_1} =  \phi_{\omega_n} \circ \cdots \circ \phi_{\omega_1} : X_{t(\omega_1)} \rightarrow X_{i(\omega_n)},
$$
with image
$$
\Delta_{\omega_n, \ldots, \omega_1} = \phi_{\omega_n, \ldots, \omega_1}\left( X_{t(\omega_1)} \right).
$$
Of course, we have an analogous inequality to Equation \ref{contract} for left-truncated dual words.

\subsection{Conformal systems}
\label{conf}
We now list several more assumptions that will be key in later studies of limit sets.

\begin{enumerate}[label=(\alph*),leftmargin=2.5\parindent]
\item The spaces $ X_v $ are compact, convex, and lie in a common subspace of Euclidean space $ \mathbb{R}^d $ for some $ d \geq 1 $.
\vspace{0.1cm}
\item \textit{(Open set condition)} For all $ e_1, e_2 \in E $,
$$
\phi_{e_1}\left( \text{int}(X_{t(e_1)}) \right) \cap \phi_{e_2} \left( \text{int}(X_{t(e_2)}) \right) = \emptyset.
$$
\vspace{0.1cm}
\item For each $ v \in V $ there is an open neighborhood $ W_v $ of $ X_v $, so that each map $ \phi_e : X_{t(e)} \rightarrow X_{i(e)} $ extends to a $ C^1 $ conformal diffeomorphism $ W_{t(e)} \rightarrow W_{i(e)} $.
\vspace{0.1cm}
\item There exists constants $ C, \alpha > 0 $ such that 
$$
\Big| |D\phi_e(x)| - |D\phi_e(y)| \Big| \leq C \|(D\phi_e)^{-1}\|^{-1} |x-y|^{\alpha}
$$
for all $ e \in E $ and $ x,y \in X_{t(e)} $.
\vspace{0.1cm}
\end{enumerate}

If a GDMS satisfies (a) -- (d), we call it a \textit{conformal} GDMS or CGDMS.
For the remainder of this work, we will only be concerned with limit sets of CGDMS.

The convexity assumption in (a) can be replaced by a weaker cone condition. 
When $ d \geq 2 $, condition (d) is implied by conditions (a) and (c), as a consequence of the Koebe distortion theorem.
When the alphabet $ E $ is finite, it is not necessary to impose condition (d), as long as the maps $ \phi_e $ are of class $ C^{1+\alpha} $. 
For details, see \cite{MU1} and \cite{MU2}.

Later we will need additional assumptions from the following list, which we will specify when needed.

\begin{enumerate}[leftmargin=2.5\parindent]
\item[(b')] \textit{(Strong separation condition)} There exists a constant $ a > 0 $ such that for all $ e_1, e_2 \in E $, $ \Delta_{e_1} $ and $ \Delta_{e_2} $ are separated by at least $ a $.
\vspace{0.1cm}
\item[(e)] \textit{(Finite primitivity)} For some $ n \geq 1 $ there exists a finite set of words $ \Lambda \subset \Sigma^n $ such that for all $ e_1, e_2 \in E $ there exists $ (\omega_1, \ldots, \omega_n) \in \Lambda $ such that $ (e_1, \omega_1, \ldots, \omega_n, e_2) \in \Sigma^{n+2} $.
\vspace{0.1cm}
\item[(f)] \textit{(exponential geometry)} The following constant is nonzero:
$$
\inf_{e \in E} \inf_{x \in X_{t(e)}} | D\phi_e(x) | = \lambda^- > 0.
$$
\vspace{0.1cm}
\end{enumerate}

For the remainder of this section, we will discuss several consequences of these assumptions, which will be used extensively in later sections.
We will not require finite primitivity (e) until we study the pressure in Section \ref{pres}.

If $ S $ satisfies the strong separation property (b') (or possibly weaker; $ S $ is pointwise finite) we have the following characterization of the limit set $ J $:
\begin{equation}
\label{J}
J = \pi(\Sigma) = \bigcup_{\omega \in \Sigma} \bigcap_{n \geq 1} \Delta_{\omega |_n} = \bigcap_{n \geq 1} \bigcup_{\omega \in \Sigma^n} \Delta_{\omega},
\end{equation}
and $ J $ is totally disconnected. \
It is perfect because of the uniform contraction property from Equation \ref{contract}. 
Since each $ X_v $ is compact, $ J $ is a Cantor set in $ \mathbb{R}^d $.
In fact, $ \pi $ is a homeomorphism.
In the absence of a separation condition, however, $ \pi $ fails to be injective.

For any word $ \omega \in \Sigma $ we have the nesting condition $ \Delta_{\omega |_n} \supset \Delta_{\omega |_{n+1}} $.
Then for any $ n \geq 1 $, the collections $ \bigcup_{\omega \in \Sigma^n} \Delta_{\omega} $ contain $ J $ and as $ n $ increases, these collections are successively better approximations of $ J $.

There is no analogous nesting condition for dual words $ \omega \in \widetilde{\Sigma} $; in fact, $ \Delta_{\omega_n, \ldots, \omega_1} $ is disjoint from $ \Delta_{\omega_{n+1}, \ldots, \omega_1} $ unless $ \omega_n = \omega_{n+1} $.
Thus the intersection $ \bigcap_{n \geq 1} \Delta_{\omega |_n} $ is usually empty for dual words $ \omega \in \widetilde{\Sigma} $, with the exception of the admissible constant words $ (\ldots, e, e) $ for some $ e \in E $.
So the coding map $ \pi : \Sigma \rightarrow \bigcup_{v \in V} X_v $ has no extension to the dual $ \widetilde{\Sigma} $.
However, because $ \Sigma^n = \widetilde{\Sigma}^n $ for each $ n \geq 1 $, we may write Equation \ref{J} as
$$
J = \bigcap_{n \geq 1} \bigcup_{\omega \in \widetilde{\Sigma}^n} \Delta_{\omega},
$$
and as $ n $ increases, the collection of dual sets $ \bigcup_{\omega \in \widetilde{\Sigma}^n} \Delta_{\omega} $ also provide successively better approximations of $ J $.
From Equation \ref{contract}, we know that $ \text{diam}(\Delta_{\omega |_n}) \rightarrow 0 $ and the convergence is \textit{at least} exponential.
If we assume (f), then
$$
\text{diam}(\Delta_{\omega |_n}) \geq (\lambda^-)^n.
$$
This implies that the convergence is \textit{precisely} exponential, hence the term `exponential geometry.'
With this assumption, we can upgrade the separation condition (b') as follows.

\begin{lemma}
\label{sep}
Let $ S $ be a CGDMS satisfying the strong separation and exponential geometry assumptions.
Let $ \omega \neq \tau \in \Sigma $, and let $ N = N(\omega, \tau) $ be defined in Equation \ref{N}.
For any $ n > N $, there exists $ a > 0 $ such that the sets $ \Delta_{\omega |_n} $ and $ \Delta_{\tau |_n} $ are separated by at least $ a (\lambda^-)^N $.
\end{lemma}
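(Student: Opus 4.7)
The plan is to exploit the shared initial segment of length $N$. Since $\omega|_N = \tau|_N$ and $\omega_{N+1} \neq \tau_{N+1}$, for any $n > N$ I can factor
$$
\Delta_{\omega|_n} = \phi_{\omega|_N}(\Delta_{\omega_{N+1}, \ldots, \omega_n}), \qquad \Delta_{\tau|_n} = \phi_{\omega|_N}(\Delta_{\tau_{N+1}, \ldots, \tau_n}),
$$
with both preimages contained in the convex space $X_{t(\omega_N)} = X_{i(\omega_{N+1})} = X_{i(\tau_{N+1})}$, and sitting respectively inside $\Delta_{\omega_{N+1}}$ and $\Delta_{\tau_{N+1}}$. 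Because $\omega_{N+1} \neq \tau_{N+1}$, the strong separation condition (b') supplies a uniform $a > 0$ with $\text{dist}(\Delta_{\omega_{N+1}}, \Delta_{\tau_{N+1}}) \geq a$, which restricts to the same bound on the preimages.

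It remains to argue that $\phi_{\omega|_N}$ has a lower Lipschitz constant comparable to $(\lambda^-)^N$ on $X_{t(\omega_N)}$. The chain rule together with the exponential geometry hypothesis (f) immediately gives $|D\phi_{\omega|_N}(z)| \geq (\lambda^-)^N$ at every $z \in X_{t(\omega_N)}$. Promoting this pointwise derivative bound to a two-point lower bound on the map itself is the main technical point: a direct mean value argument only controls image distances along the image of a line segment (the inner path metric on $\phi_{\omega|_N}(X_{t(\omega_N)})$), and since the image of a convex set under a Möbius transformation in dimension $\geq 2$ is typically not convex, this inner metric need not agree with the Euclidean one. The standard remedy is the Koebe-type bounded distortion estimate that follows from (a), (c), and (d): a constant $K \geq 1$, independent of the word, with
$$
|\phi_{\omega|_N}(x) - \phi_{\omega|_N}(y)| \geq K^{-1}(\lambda^-)^N |x - y|
$$
for all $x, y \in X_{t(\omega_N)}$. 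In dimension one this follows at once from the fundamental theorem of calculus on the interval domain.

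Taking any $x \in \Delta_{\omega_{N+1}, \ldots, \omega_n}$ and $y \in \Delta_{\tau_{N+1}, \ldots, \tau_n}$ and combining the two ingredients gives $\text{dist}(\Delta_{\omega|_n}, \Delta_{\tau|_n}) \geq aK^{-1}(\lambda^-)^N$, which is the stated inequality once the separation constant is redefined to absorb the factor $K^{-1}$. I expect the distortion estimate to be the principal obstacle; fortunately it is a workhorse of the theory, developed in detail in \cite{MU1} and \cite{MU2}, and the rest of the argument is essentially bookkeeping through the common-prefix factorization.
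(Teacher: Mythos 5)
Your proof follows the same route as the paper's: factor $\Delta_{\omega|_n} = \phi_{\omega|_N}(\Delta_{\omega_{N+1},\ldots,\omega_n})$ and $\Delta_{\tau|_n} = \phi_{\omega|_N}(\Delta_{\tau_{N+1},\ldots,\tau_n})$, apply the strong separation condition (b') to the distinct letters $\omega_{N+1} \neq \tau_{N+1}$, and push that separation through $\phi_{\omega|_N}$ via a lower Lipschitz bound of order $(\lambda^-)^N$. The one point where you diverge is one where you are in fact more careful than the paper: the paper asserts $|\phi_{\omega|_N}(x) - \phi_{\omega|_N}(y)| \geq (\lambda^-)^N|x-y|$ directly ``from the mean value theorem and condition (f),'' which, as you observe, is not literally valid when $d \geq 2$, since the pointwise bound $|D\phi_{\omega|_N}| \geq (\lambda^-)^N$ controls only lengths of image paths and neither the image of the convex set $X_{t(\omega_N)}$ nor the domain needed to run the argument on the inverse map is convex. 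Your insertion of the standard Koebe-type distortion constant $K^{-1}$ (the workhorse estimate of \cite{MU1} and \cite{MU2}) repairs this, and since the lemma only claims the existence of some $a > 0$, the factor $K^{-1}$ is harmlessly absorbed; your argument is correct and, if anything, more complete than the one in the paper.
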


\begin{proof}
By definition, $ \omega_{N+1} \neq \tau_{N+1} $, so by condition (b'), there exists a constant $ a > 0 $ such that $ \Delta_{\omega_{N+1}} $ and $ \Delta_{\tau_{N+1}} $ are separated by at least $ a $.
By the nesting property, this implies for any $ n > N $ that $ \Delta_{\omega_{N+1}, \ldots, \omega_n} $ and $ \Delta_{\tau_{N+1}, \ldots, \tau_n} $ are also separated by at least $ a $.
From the mean value theorem and condition (f),
$$
|\phi_{\omega_1, \ldots, \omega_N}(x) - \phi_{\omega_1, \ldots, \omega_N}(y)| \geq (\lambda^-)^N |x-y|.
$$
Because $ \omega_i = \tau_i $ for all $ 1 \leq i \leq N $, the claim follows.
\end{proof}

\section{Ratio geometry and the scaling function}
In this section we will introduce the ratio geometry on the dual, and use this to define the scaling function.
Convergence of the scaling function will follow from the following important bounded distortion property, which we phrase in terms of the dual $ \widetilde{\Sigma} $.

\begin{proposition}[\textit{Bounded distortion}]
\label{bddist}
Let $ S = \{ \phi_e \}_{e \in E} $ be a CGDMS.
For any $ n,m \geq 1 $, $ \omega = (\ldots, \omega_2, \omega_1) \in \widetilde{\Sigma} $, and $ x,y \in \Delta_{\omega_n, \ldots, \omega_1} $, there exists a constant $ K > 0 $ such that
$$
e^{-K\lambda^{\alpha n}} \leq \frac{|D \phi_{\omega_{n+m}, \ldots, \omega_{n+1}}(x)|}{|D \phi_{\omega_{n+m}, \ldots, \omega_{n+1}}(y)|} \leq e^{K\lambda^{\alpha n}}.
$$
\end{proposition}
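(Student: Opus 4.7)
The plan is to take logarithms and reduce the ratio bound to a telescoping sum, then use condition (d) on each factor and control the diameters of the iterates by the uniform contraction estimate \eqref{contract}.

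First I would note that conformality lets us apply the chain rule in the simple form $|D(f \circ g)(x)| = |Df(g(x))| \cdot |Dg(x)|$. Writing $x_0 = x$, $y_0 = y$ and $x_k = \phi_{\omega_{n+k}}(x_{k-1})$, $y_k = \phi_{\omega_{n+k}}(y_{k-1})$ for $1 \leq k \leq m-1$, the log-ratio telescopes as
$$
\log \frac{|D\phi_{\omega_{n+m},\ldots,\omega_{n+1}}(x)|}{|D\phi_{\omega_{n+m},\ldots,\omega_{n+1}}(y)|} = \sum_{k=0}^{m-1} \log \frac{|D\phi_{\omega_{n+k+1}}(x_k)|}{|D\phi_{\omega_{n+k+1}}(y_k)|}.
$$
So it is enough to bound the individual single-map log-ratios and then sum.

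Next I would convert condition (d) into an estimate of the form $\bigl|\log(|D\phi_e(x)|/|D\phi_e(y)|)\bigr| \leq C' |x-y|^\alpha$ for a constant $C'$ independent of $e$. The key observation is that for a conformal map, $\|(D\phi_e)^{-1}\|^{-1} = \inf_z |D\phi_e(z)|$, so the bound in (d) gives $\bigl||D\phi_e(x)| - |D\phi_e(y)|\bigr| \leq C\, |D\phi_e(y)|\, |x-y|^\alpha$, equivalently $\bigl||D\phi_e(x)|/|D\phi_e(y)| - 1\bigr| \leq C |x - y|^\alpha$. Since $|x-y|$ is bounded by $\max_v \mathrm{diam}(X_v)$, the expression $|D\phi_e(x)|/|D\phi_e(y)|$ stays in a fixed compact subinterval of $(0,\infty)$, so the elementary inequality $|\log(1+u)| \leq C'' |u|$ on that interval yields $\bigl|\log(|D\phi_e(x)|/|D\phi_e(y)|)\bigr| \leq C' |x-y|^\alpha$.

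The final step is to bound each $|x_k - y_k|$. Since $x, y \in \Delta_{\omega_n,\ldots,\omega_1}$, applying $\phi_{\omega_{n+1}} \circ \cdots \circ \phi_{\omega_{n+k}}$ places $x_k, y_k$ inside $\Delta_{\omega_{n+k},\ldots,\omega_1}$ (admissibility in $\widetilde{\Sigma}$ makes this composition legal), and by the analogue of \eqref{contract} for dual words we get $|x_k - y_k| \leq \lambda^{n+k} \max_v \mathrm{diam}(X_v)$. Plugging this in and summing a geometric series in $\lambda^\alpha$ gives
$$
\left|\log \frac{|D\phi_{\omega_{n+m},\ldots,\omega_{n+1}}(x)|}{|D\phi_{\omega_{n+m},\ldots,\omega_{n+1}}(y)|}\right| \leq C' D^\alpha \sum_{k=0}^{m-1} \lambda^{\alpha(n+k)} \leq \frac{C' D^\alpha}{1-\lambda^\alpha}\, \lambda^{\alpha n},
$$
where $D = \max_v \mathrm{diam}(X_v)$, and exponentiating with $K = C' D^\alpha/(1-\lambda^\alpha)$ gives the two-sided bound.

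The only real subtlety is verifying that condition (d) is equivalent to a genuine Hölder bound on $\log|D\phi_e|$ with a constant uniform in $e$; once the conformality identity $\|(D\phi_e)^{-1}\|^{-1} = \inf |D\phi_e|$ is invoked, the rest is routine. The chain-rule telescoping and geometric-sum bookkeeping are standard, and the exponent $\alpha n$ (rather than $\alpha(n+m)$) arises precisely because the first iterate $x_0, y_0$ is already nested in a set of diameter $\lesssim \lambda^n$.
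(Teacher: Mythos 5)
Your proposal is correct and follows essentially the same route as the paper: telescope the log-ratio via the chain rule along the orbit $x_k = \phi_{\omega_{n+k}}(x_{k-1})$, use the conformality identity $\|(D\phi_e)^{-1}\|^{-1} = \inf_z|D\phi_e(z)|$ to turn condition (d) into a uniform H\"{o}lder bound on $\log|D\phi_e|$, control $|x_k - y_k|$ by the contraction estimate for dual words, and sum the geometric series to get $K\lambda^{\alpha n}$. Your extra care with the elementary inequality $|\log(1+u)|\leq C''|u|$ on a compact interval is a minor tightening of a step the paper leaves implicit, not a different argument.
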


\begin{proof}
Fix $ \omega = (\ldots, \omega_2, \omega_1) \in \widetilde{\Sigma} $, and $ x,y \in \Delta_{\omega_n, \ldots, \omega_1} $.
Define the sequence $ x_k $ by $ x_1 = x $ and $ x_k = \phi_{\omega_{n+j-1}, \ldots, \omega_{n+1}}(x) $ for $ 2 \leq k \leq m $, and similarly define $ y_k $ in terms of $ y $.
Because $ x,y \in \Delta_{\omega_n, \ldots, \omega_1} $, note that $ x_j, y_j \in \Delta_{\omega_{n+j-1}, \ldots, \omega_1} $.
Using this, and assumption (d) in Section \ref{conf}, we have
\begin{align*}
\left| \log \frac{|D \phi_{\omega_{n+m}, \ldots, \omega_{n+1}}(x)|}{|D \phi_{\omega_{n+m}, \ldots, \omega_{n+1}}(y)|} \right| &= \left| \sum_{j=1}^m \log \left( 1+\frac{|D\phi_{\omega_{n+j}}(x_j)| - |D\phi_{\omega_{n+j}}(y_j)|}{|D\phi_{\omega_{n+j}}(x_j)|} \right) \right| \\
&\leq \sum_{j=1}^m \big\|(D\phi_{\omega_{n+j}})^{-1} \big\| \: \Big| |D\phi_{\omega_{n+j}}(x_j)| - |D\phi_{\omega_{n+j}}(y_j)| \Big| \\
&\leq \sum_{j=1}^m C |x_j - y_j|^{\alpha} \\
&\leq C \sum_{j=1}^m \lambda^{\alpha(n+j-1)} \leq \frac{C \lambda^{\alpha n}}{1-\lambda^{\alpha}}.
\end{align*}
Setting $ \displaystyle K = \frac{C}{1-\lambda^{\alpha}} $ concludes the proof.
\end{proof}

\subsection{Ratio geometry}
\label{ratgeo}
Fix a CGDMS $ S $, determining the sets $ \Delta_{\omega |_n} $ for each dual word $ \omega = (\ldots, \omega_2, \omega_1) \in \widetilde{\Sigma} $.
Set 
$$
r_1(\omega_1) = \frac{\text{diam}\left(\phi_{\omega_1}(X_{t(\omega_1)})\right)}{\text{diam}(X_{t(\omega_1)})},
$$
and for each $ n \geq 2 $ set
$$
r_n(\omega |_n) = \frac{\text{diam}(\Delta_{\omega_n, \ldots, \omega_1})}{\text{diam}(\Delta_{\omega_n, \ldots, \omega_2})}.
$$
From the strict containment $ \Delta_{\omega_n, \ldots, \omega_1} \subsetneq \Delta_{\omega_n, \ldots, \omega_2} $ and assumption (e) from Section \ref{conf}, we have $ 0 < r_n(\omega |_n) < 1 $ for all $ n \geq 1 $.
The sequence of functions $ r_n : \widetilde{\Sigma}^n \rightarrow (0,1) $ is called the \textit{ratio geometry} of the GDMS $ S $.
The bounded distortion property implies the following distortion estimate for the ratio geometry sequence.
\begin{proposition}
\label{dualrat}
There exists a constant $ K > 0 $ such that for all $ \omega \in \widetilde{\Sigma} $ and $ n,m \geq 1 $,
$$
e^{-K \lambda^{n \alpha}} \leq \frac{r_{n+m}(\omega |_{n+m})}{r_n(\omega |_n)} \leq e^{K \lambda^{n \alpha}}.
$$
\end{proposition}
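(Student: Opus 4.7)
The key observation is that $ \Delta_{\omega_{n+m}, \ldots, \omega_1} = \psi(A) $ and $ \Delta_{\omega_{n+m}, \ldots, \omega_2} = \psi(B) $, where $ \psi := \phi_{\omega_{n+m}, \ldots, \omega_{n+1}} $, $ A := \Delta_{\omega_n, \ldots, \omega_1} $, and $ B := \Delta_{\omega_n, \ldots, \omega_2} $. Substituting these into the definition of $r_n, r_{n+m}$ rearranges the target quantity as
$$
\frac{r_{n+m}(\omega|_{n+m})}{r_n(\omega|_n)} = \frac{\text{diam}(\psi(A))/\text{diam}(A)}{\text{diam}(\psi(B))/\text{diam}(B)}.
$$
So it suffices to show that, for each $S \in \{A, B\}$, the effective scaling factor $\text{diam}(\psi(S))/\text{diam}(S)$ agrees with $|D\psi(x_0)|$ at a common reference point $x_0 \in A \subset B$ up to a multiplicative error of size $e^{\pm K \lambda^{\alpha n}}$.

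The first step is a two-sided mean value estimate. For the upper bound, let $a_1, a_2 \in S$ realize $\text{diam}(\psi(S))$; the segment $[a_1, a_2]$ lies in the convex ambient set $X_{i(\omega_n)}$, and applying the mean value theorem and conformality along it yields $\text{diam}(\psi(S)) \leq \sup_{[a_1, a_2]}|D\psi| \cdot \text{diam}(S)$. For the lower bound, pick $b_1, b_2 \in S$ realizing $\text{diam}(S)$, and run the same argument for $\psi^{-1}$ on the segment $[\psi(b_1), \psi(b_2)]$ sitting inside the convex $X_{i(\omega_{n+m})}$ via the conformal extension on the $W_v$; this gives $\text{diam}(\psi(S)) \geq \inf|D\psi| \cdot \text{diam}(S)$.

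The second step converts these suprema and infima into $|D\psi(x_0)|$ via Proposition \ref{bddist}. On $A$ the proposition applies directly. For $B$, shifting the dual word $(\ldots, \omega_3, \omega_2, \omega_1)$ by one index exhibits $B$ as a dual $\Delta$-set at level $n-1$, so Proposition \ref{bddist} yields distortion $e^{K \lambda^{\alpha(n-1)}}$, which is $e^{(K\lambda^{-\alpha}) \lambda^{\alpha n}}$ after absorbing $\lambda^{-\alpha}$ into the constant. The auxiliary line segments from Step 1 are not contained in $A$ or $B$, but they lie in $X_{i(\omega_n)}$ and have diameter $\lesssim \lambda^n$; a verbatim rerun of the proof of Proposition \ref{bddist} gives the same distortion bound on these segments.

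Combining the two steps, each diameter ratio $\text{diam}(\psi(S))/\text{diam}(S)$ equals $|D\psi(x_0)|$ up to a factor $e^{\pm K' \lambda^{\alpha n}}$, so their quotient lies in $[e^{-2K' \lambda^{\alpha n}}, e^{2K' \lambda^{\alpha n}}]$, and relabeling the constant finishes the proof. The main obstacle I anticipate is the lower bound in Step 1: since $A$ and $B$ are not convex, the segments used in the mean value estimates exit the $\Delta$-sets. The resolution is to work in the conformal extensions on the $W_v$ and to note that these segments still have diameter comparable to $\text{diam}(S) \lesssim \lambda^n$, which is small enough for the distortion estimate to remain valid.
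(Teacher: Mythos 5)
Your proposal follows essentially the same route as the paper: both rewrite $r_{n+m}/r_n$ as a quotient of diameter-contraction ratios of $\psi=\phi_{\omega_{n+m},\ldots,\omega_{n+1}}$ on $\Delta_{\omega_n,\ldots,\omega_1}\subset\Delta_{\omega_n,\ldots,\omega_2}$ and then conclude from Proposition~\ref{bddist} (with the same index shift to level $n-1$ absorbed into $K$). The only difference is that the paper compresses your two-sided sup/inf estimates into a single invocation of the mean value theorem producing exact points $x,y$ with $\operatorname{diam}(\psi(S))=|D\psi(\cdot)|\operatorname{diam}(S)$; your more explicit version is a careful unpacking of that step, not a different argument.
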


\begin{proof}
By the mean value theorem there exist $ x \in \Delta_{\omega_n, \ldots, \omega_1} $ and $ y \in \Delta_{\omega_n, \ldots, \omega_2} $ such that
\begin{align*}
\text{diam}(\Delta_{\omega_{n+m}, \ldots, \omega_1}) &= |D\phi_{\omega_{n+m}, \ldots, \omega_{n+1}}(x)| \: \text{diam}(\Delta_{\omega_n, \ldots, \omega_1}), \text{ and } \\
\text{diam}(\Delta_{\omega_{n+m}, \ldots, \omega_2}) &= |D\phi_{\omega_{n+m}, \ldots, \omega_{n+1}}(y)|\: \text{diam}(\Delta_{\omega_n, \ldots, \omega_2}).
\end{align*}
Combining these equations yields
$$
\frac{r_{n+m}(\omega |_{n+m})}{r_n(\omega |_n)} = \frac{|D \phi_{\omega_{n+m}, \ldots, \omega_{n+1}}(x)|}{|D \phi_{\omega_{n+m}, \ldots, \omega_{n+1}}(y)|}.
$$
Because $ \Delta_{\omega_n, \ldots, \omega_1} \subset \Delta_{\omega_n, \ldots, \omega_2} $, the desired inequality now follows from Proposition \ref{bddist}.
\end{proof}

\subsection{The scaling function}
As $ n \rightarrow \infty $, the ratio geometry sequence $  r_n(\omega |_n) $ measures the contraction rate at arbitrarily small scales.
Along dual words $ \omega \in \widetilde{\Sigma} $, this rate approaches a constant:
\begin{equation}
\label{scaleq}
r(\omega) = \lim_{n \to \infty} r_n(\omega |_n).
\end{equation}
The function $ r : \widetilde{\Sigma} \rightarrow (0,1) $ is called the \textit{scaling function} on the dual $ \widetilde{\Sigma} $.

\begin{proposition}
\label{scale}
For each $ \omega \in \widetilde{\Sigma} $ the limit in Equation \ref{scaleq} exists, and the convergence is exponential in $ n $.
\end{proposition}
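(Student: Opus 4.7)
The plan is to deduce this straightforwardly from the ratio distortion estimate already established in Proposition \ref{dualrat}, which controls the multiplicative ratio $r_{n+m}(\omega|_{n+m})/r_n(\omega|_n)$. My goal is to convert that multiplicative bound into an additive Cauchy estimate on the sequence $\{r_n(\omega|_n)\}_{n \geq 1}$, from which both existence of the limit and the exponential rate of convergence follow in one step.

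First I would fix $\omega \in \widetilde{\Sigma}$ and, since each value $r_n(\omega|_n)$ lies in $(0,1)$, write
$$\bigl|r_{n+m}(\omega|_{n+m}) - r_n(\omega|_n)\bigr| = r_n(\omega|_n) \left| \frac{r_{n+m}(\omega|_{n+m})}{r_n(\omega|_n)} - 1 \right| \leq \left| \frac{r_{n+m}(\omega|_{n+m})}{r_n(\omega|_n)} - 1 \right|.$$
Proposition \ref{dualrat} places this ratio in the interval $[e^{-K\lambda^{n\alpha}}, e^{K\lambda^{n\alpha}}]$, so the distance to $1$ is bounded above by $e^{K\lambda^{n\alpha}} - 1$. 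Since $\lambda^{n\alpha} \to 0$ and $e^x - 1 \leq 2x$ for small positive $x$, one concludes that for all sufficiently large $n$ (and uniformly in $m \geq 1$)
$$\bigl|r_{n+m}(\omega|_{n+m}) - r_n(\omega|_n)\bigr| \leq 2K \lambda^{n\alpha}.$$
After absorbing the finitely many small values of $n$ into a larger constant $K'$, this estimate holds for every $n$.

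The uniform bound in $m$ shows that $\{r_n(\omega|_n)\}$ is a Cauchy sequence in $\mathbb{R}$, so the limit $r(\omega)$ defined by Equation \ref{scaleq} exists. Letting $m \to \infty$ in the displayed inequality then yields
$$\bigl|r(\omega) - r_n(\omega|_n)\bigr| \leq K' \lambda^{n\alpha},$$
which is exactly the claimed exponential rate of convergence.

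There is no genuine obstacle here, since the hard analytic work has already been absorbed into the bounded distortion estimate of Proposition \ref{bddist} and its consequence, Proposition \ref{dualrat}. The only minor technical point is passing from a multiplicative bound on ratios to an additive bound on differences, which is handled by the elementary inequality $e^x - 1 \leq 2x$ together with the boundedness of $r_n(\omega|_n) < 1$.
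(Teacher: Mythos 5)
Your proof is correct and follows essentially the same route as the paper: both arguments rest entirely on Proposition \ref{dualrat}, establish that the sequence $r_n(\omega|_n)$ is Cauchy, and obtain the exponential rate by letting $m \to \infty$ in the uniform distortion bound. The only difference is cosmetic --- the paper passes through $\log r_n(\omega|_n)$ and invokes Lemma \ref{aux} to convert the multiplicative bound into an additive one, whereas you do this directly via $e^x - 1 \leq 2x$ for small $x$ (together with $1 - e^{-x} \leq e^x - 1$), which is if anything slightly cleaner.
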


For the proof (and for later proofs) we will require an auxiliary lemma.

\begin{lemma}
\label{aux}
For $ C, A, \delta > 0 $ and $ 0 < t < 1 $, the sequences $ \log(1+C e^{-n \delta}) $, $ \log(1-C e^{-n \delta}) $, and $ A t^n $ are all asymptotically equivalent; i.e. given $ C, \delta > 0 $ there exist $ A > 0 $ and $ 0 < t < 1 $ such that $ \log(1+C e^{-n \delta}) \leq At^n $ for all $ n \geq 1 $, and there are identical statements comparing all pairs of these three sequences.
\end{lemma}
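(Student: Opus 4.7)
The plan is to reduce all three sequences to $e^{-n\delta}$ via the Taylor expansion $\log(1+x) = x + O(x^2)$ near $x = 0$, and then identify $e^{-n\delta}$ with $t^n$ by setting $t := e^{-\delta}$. Once that identification is made, the six pairwise comparisons collapse into elementary inequalities between $\log(1 \pm Ce^{-n\delta})$ and constant multiples of $Ce^{-n\delta} = Ct^n$.

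First I would treat $\log(1 + Ce^{-n\delta})$. The inequality $\log(1+x) \leq x$ valid for all $x > -1$ immediately gives the upper bound with $A = C$ and $t = e^{-\delta}$. For the lower bound, the concavity of $\log(1+x)$ yields $\log(1+x) \geq (\log 2)\, x$ on $[0,1]$, which handles all $n$ large enough that $Ce^{-n\delta} \leq 1$; for the finitely many smaller $n$, the value $\log(1+Ce^{-n\delta})$ is a positive constant while $t^n$ is bounded, so a lower bound of the form $A' t^n$ is obtained by shrinking $A'$ enough to accommodate these initial terms.

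Next, for $\log(1 - Ce^{-n\delta})$ the expression is only defined once $Ce^{-n\delta} < 1$, which fails for finitely many small $n$ when $C$ is large. For $n$ large enough that $Ce^{-n\delta} \leq \tfrac{1}{2}$, the standard inequalities
$$
Ce^{-n\delta} \;\leq\; -\log\bigl(1 - Ce^{-n\delta}\bigr) \;\leq\; 2 Ce^{-n\delta}
$$
give the desired asymptotic equivalence with $Ct^n$. The finitely many problematic values of $n$ are absorbed into the multiplicative constant, since the claim only requires the inequality to hold for every $n \geq 1$, not merely ultimately.

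The main subtlety is exactly this initial-value issue: strictly speaking, when $Ce^{-\delta} \geq 1$, the quantity $\log(1 - Ce^{-n\delta})$ may not even be defined for the first few $n$. In the applications where this lemma controls error terms with $Ce^{-n\delta} \to 0$, the natural convention is that the inequalities hold for $n$ past the threshold $n_0 := \lceil \delta^{-1}\log(2C) \rceil$, with any earlier terms folded into the constant $A$. Beyond this minor bookkeeping, the proof is a direct application of the three elementary estimates above, together with transitivity to recover all six pairwise comparisons.
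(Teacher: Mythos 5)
Your proof is correct and takes essentially the same route as the paper, whose entire argument is the one-line remark that the claim ``follows easily from the Taylor expansion of $\log(1\pm x)$ about $x=0$''; your elementary inequalities $\log(1+x)\leq x$, the concavity chord bound, and the two-sided estimate for $-\log(1-x)$ on $[0,\tfrac12]$ are exactly the content of that expansion, with the finitely many initial terms absorbed into the constants. The definedness issue for $\log(1-Ce^{-n\delta})$ at small $n$ that you flag is real and is silently elided by the paper, so your explicit threshold $n_0$ is a welcome addition rather than a deviation.
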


\begin{proof}
The proof follows easily from the Taylor expansion of $ \log(1 \pm x) $ about $ x=0 $.
\end{proof}

\begin{proof}[Proof of Proposition \ref{scale}]
By Proposition \ref{dualrat}, 
$$
\left| \log \left( \frac{r_{n+m}(\omega |_{n+m})}{r_n (\omega |_n)} \right) \right| \leq K \lambda^{n \alpha}
$$
for all $ n,m \geq 1 $.
Because $ \lambda < 1 $, this shows that the sequence $ \log r_n(\omega |_n) $ is Cauchy.
Because $ r_n(\omega |_n) $ is bounded away from zero for each $ \omega \in \widetilde{\Sigma} $, the limit $ r(\omega) $ exists.

To see that the convergence is exponential, take $ m \to \infty $ in Proposition \ref{dualrat}, which yields
$$
e^{-K \lambda^{n \alpha}} \leq \frac{r_n(\omega |_n)}{r(\omega)} \leq e^{K \lambda^{n \alpha}}.
$$
By Lemma \ref{aux}, there exist constants $ C, \delta > 0 $ such that $ K \lambda^{n \alpha} \leq \log (1+C e^{-n \delta}) $, and $ -K \lambda^{n \alpha} \geq \log (1-C e^{-n \delta}) $.
Setting $ A = C r(\omega) $, we obtain
$$
| r_n(\omega |_n) - r(\omega) | \leq A e^{-n \delta}.
$$
\end{proof}

Recall that the dual space $ \widetilde{\Sigma} $ is metrized by $ \rho(\cdot, \cdot) = t^{N(\cdot, \cdot)} $ where $ 0 < t < 1 $ is any constant and $ N = N(\omega, \tau) $ is defined in Equation \ref{N}. 
As a function between metric spaces, the scaling function satisfies the following property.

\begin{proposition}
The scaling function $ r : \widetilde{\Sigma} \rightarrow (0,1) $ is H\"{o}lder continuous.
\end{proposition}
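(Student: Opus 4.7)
The plan is to exploit the exponential convergence estimate already established in Proposition \ref{scale} together with the crucial observation that $r_n$ depends only on the first $n$ letters of a dual word. Given two distinct dual words $\omega, \tau \in \widetilde{\Sigma}$, let $N = N(\omega, \tau)$ be the length of their longest common initial subword (reading from the right, as in Equation \ref{N}). Since $\omega_j = \tau_j$ for $1 \le j \le N$, the left-truncations satisfy $\omega|_N = \tau|_N$ as elements of $\widetilde{\Sigma}^N$, and therefore $r_N(\omega|_N) = r_N(\tau|_N)$.

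I would then apply the triangle inequality in the obvious way:
$$
| r(\omega) - r(\tau) | \le | r(\omega) - r_N(\omega|_N) | + | r_N(\omega|_N) - r_N(\tau|_N) | + | r_N(\tau|_N) - r(\tau) |.
$$
The middle term vanishes by the preceding observation. For the outer two terms, Proposition \ref{scale} supplies a bound $| r_n(\sigma|_n) - r(\sigma) | \le A e^{-n\delta}$ for each dual word $\sigma$. Inspecting the proof of Proposition \ref{scale}, the constant $A = C r(\sigma)$ satisfies $A < C$ uniformly in $\sigma \in \widetilde{\Sigma}$ because $r(\sigma) < 1$, so we may fix a uniform constant. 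Hence $| r(\omega) - r(\tau) | \le 2 C e^{-N \delta}$.

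Finally, I would translate this into a Hölder estimate in the metric $\widetilde{\rho}(\omega, \tau) = t^{N(\omega,\tau)}$. Choosing $\beta = \delta / \log(1/t) > 0$, we have $e^{-N\delta} = t^{N \beta} = \widetilde{\rho}(\omega, \tau)^\beta$, so
$$
| r(\omega) - r(\tau) | \le 2 C\, \widetilde{\rho}(\omega, \tau)^\beta,
$$
which is the desired Hölder continuity.

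There is no real obstacle here; the content of the proposition is essentially already contained in Proposition \ref{scale}, and the argument amounts to noting that the rate of exponential convergence in $n$ is exactly what matches the exponential decay of the symbolic metric in $N$. The only slight care needed is to verify that the constant in the convergence estimate of Proposition \ref{scale} can be chosen independently of $\omega$, which is immediate from $r(\omega) \in (0,1)$.
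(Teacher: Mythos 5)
Your proof is correct and takes essentially the same route as the paper's: both hinge on the observation that $\omega|_N = \tau|_N$ for $N = N(\omega,\tau)$, so the ratio geometries agree at level $N$, combined with the fact that $r_n \to r$ exponentially in $n$ at a rate uniform over the dual Cantor set. The only difference is bookkeeping: the paper works multiplicatively, setting $n = N$ in the ratio-of-ratios estimate of Proposition \ref{dualrat} and letting $m \to \infty$ to get H\"{o}lder continuity of $\log r$ with a constant that is uniform by construction, whereas you work additively via the triangle inequality and Proposition \ref{scale}, which requires the small extra uniformity check on the constant $A = C\,r(\omega)$ that you correctly supply using $r(\omega) < 1$.
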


\begin{proof}
Let $ \omega \neq \tau \in \widetilde{\Sigma} $, so that $ 0 \leq N(\omega, \tau) < \infty $.
By Proposition \ref{dualrat}, there exists a constant $ K > 0 $ such that for all $ n,m \geq 1 $ we have
$$
e^{-K \lambda^{n\alpha}} \leq \cfrac{ \cfrac{r_{n+m}(\omega |_{n+m})}{r_{n+m}(\tau |_{n+m})} }{ \cfrac{r_n(\omega |_n)}{r_n(\tau |_n)} } \leq e^{K \lambda^{n\alpha}}
$$
Now set $ n = N(\omega, \tau) $, so the denominator of the above fraction equals 1. 
Taking $ m \to \infty $ yields
$$
\left| \log \frac{r(\omega)}{r(\tau)} \right| \leq K \lambda^{N\alpha}.
$$
Because $ 0 < \lambda < 1 $, the metric $ \widetilde{\rho}(\cdot, \cdot) = \lambda^{N(\cdot, \cdot)} $ generates the topology on $ \widetilde{\Sigma} $.
Then the above inequality reads
$$
\left| \log \frac{r(\omega)}{r(\tau)} \right| \leq K \rho(\omega, \tau)^{\alpha},
$$
so $ \log r $ is H\"{o}lder continuous, and thus so is $ r $.
\end{proof}

\section{Differential and geometric equivalence}
In this section we study differential and geometric equivalence for limit sets of CGDMS.
Fix a directed graph $ (V,E) $ with incidence matrix $ A $, a family of spaces $ \{ X_v \}_{v \in V} $, and consider two CGDMS $ S = \{ \phi_e : X_{t(e)} \rightarrow X_{i(e)} \}_{e \in E} $ and $ T = \{ \psi_e : X_{t(e)} \rightarrow X_{i(e)} \}_{e \in E} $ defined by this directed graph.
These CGDMS have limit sets $ J_S $ and $ J_T $, with coding maps $ \pi_S : \Sigma \rightarrow \cup_v X_v $ and $ \pi_T : \Sigma \rightarrow \cup_v X_v $.
It is important here that $ \Sigma = \Sigma_A $ and $ A $ is the same for $ J_S $ and $ J_T $; while the maps $ \phi_e $ and $ \psi_e $ may be very different, the underlying directed graphs are equal.
We will not consider equivalence between CGDMS with different directed graphs.

We say that $ J_S $ and $ J_T $ are \textit{$ C^{1+\alpha} $-equivalent} if there exists a $ C^{1+\alpha} $ diffeomorphism $ \Phi : \mathbb{R}^d \rightarrow \mathbb{R}^d $ such that the following diagram commutes.
\vspace{0.1cm}
\begin{center}
\begin{tikzcd}
   & & \bigcup_{v \in V} X_v \arrow[dd, "\Phi |_{J_T} "] \\
    \Sigma_A \arrow[urr, "\pi_T"] \arrow[drr, "\pi_S"'] & &  \\ 
   & & \bigcup_{v \in V} X_v
\end{tikzcd}
\vspace{0.1cm}
\end{center}
 
Recall the ratio geometry sequence $ r_n : \widetilde{\Sigma} \rightarrow (0,1) $ on the dual.
We now extend this to a sequence $ r_n : \Sigma^n \rightarrow (0,1) $ in the following way. 
$$
r_n(\omega |_n) = \frac{\text{diam}(\Delta_{\omega_1,\ldots,\omega_n})}{\text{diam}(\Delta_{\omega_1,\ldots,\omega_{n-1}})}.
$$
Let $ r_n(\omega |_n) $ and $ s_n(\omega |_n) $ be the ratio geometry sequences of $ J_S $ and $ J_T $ respectively, defined on $ \Sigma^n $.
We say $ J_S $ and $ J_T $ have \textit{equivalent geometries} if for all $ (\omega_1, \omega_2, \ldots) \in \Sigma $,
$$
\frac{r_n(\omega |_n)}{s_n(\omega |_n)} \rightarrow 1
$$
as $ n \rightarrow \infty $. 
If the convergence is exponential in $ n $, we say that the geometries of $ J_S $ and $ J_T $ are \textit{exponentially equivalent}.
Notice that if $ J_S $ and $ J_T $ have equivalent geometries, then their scaling functions are equal.
We can now prove the following theorem from the introduction.

\begin{thmx}
Two limit sets of CGDMS satisfying the exponential geometry and strong separation conditions are $C^{1+\alpha}$-equivalent for some $ \alpha > 0 $ if and only if their geometries are exponentially equivalent.
\end{thmx}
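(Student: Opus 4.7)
For the forward direction, assume $\Phi : \mathbb{R}^d \to \mathbb{R}^d$ is a $C^{1+\alpha}$ diffeomorphism with $\pi_T = \Phi \circ \pi_S$. Under strong separation, the cylinder sets $\Delta^S_{\omega|_n}$ for distinct $\omega|_n \in \Sigma^n$ are pairwise disjoint (an easy induction on $n$ using that each $\phi_e$ is injective), so $\Phi$ restricts to a bijection between $J_S \cap \Delta^S_{\omega|_n}$ and $J_T \cap \Delta^T_{\omega|_n}$. By bounded distortion (Proposition \ref{bddist}) and the uniform positive diameters of the finitely many base spaces $X_v$, one has $\text{diam}(\Delta^S_{\omega|_n}) \asymp \text{diam}(J_S \cap \Delta^S_{\omega|_n})$ uniformly, and similarly for $T$. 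Since $D\Phi$ is $\alpha$-H\"{o}lder and $\text{diam}(\Delta^S_{\omega|_n}) \leq \lambda^n$, the ratio $\text{diam}(\Phi(A))/\text{diam}(A)$ with $A = J_S \cap \Delta^S_{\omega|_n}$ agrees with a single derivative value of $\Phi$ at the limit point $\pi_S(\omega)$ up to multiplicative error $1 + O(\lambda^{n\alpha})$. Taking ratios at consecutive levels $n$ and $n-1$ gives $r_n(\omega|_n)/s_n(\omega|_n) = 1 + O(\lambda^{n\alpha})$, which is exponential geometric equivalence.

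For the reverse direction, define $\Phi_0 : J_S \to J_T$ by $\Phi_0(\pi_S(\omega)) = \pi_T(\omega)$; exponential geometry and strong separation (Lemma \ref{sep}) make $\Phi_0$ well-defined and bi-H\"{o}lder. Telescoping the ratio geometry yields
$$
\frac{\text{diam}(\Delta^T_{\omega|_n})}{\text{diam}(\Delta^S_{\omega|_n})} = \frac{\text{diam}(\Delta^T_{\omega_1})}{\text{diam}(\Delta^S_{\omega_1})} \prod_{k=2}^n \frac{s_k(\omega|_k)}{r_k(\omega|_k)},
$$
and exponential equivalence forces this to converge exponentially fast to a positive limit $\ell(\omega)$. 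The plan is to verify that $\ell$ descends via $\pi_S$ to an $\alpha$-H\"{o}lder function on $J_S$, and that $\Phi_0$ is differentiable at each $x \in J_S$ with a linear differential of norm $\ell$, whose rotational component is prescribed by the natural identification $\psi_{\omega|_n} \circ \phi_{\omega|_n}^{-1} : \Delta^S_{\omega|_n} \to \Delta^T_{\omega|_n}$ (which is conformal, and whose orientation can be shown to stabilize as $n \to \infty$). A Whitney-type extension then produces a $C^{1+\alpha}$ map of $\mathbb{R}^d$ extending $\Phi_0$; bi-H\"{o}lder control on $\Phi_0^{-1}$ and an inverse function argument promote this to a diffeomorphism near $J_S$, which can be extended globally.

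The main obstacle is the reverse direction, specifically passing from the scalar derivative norm $\ell$ to a coherent linear differential at each point of $J_S$ and verifying the compatibility of Whitney jets at all scales. In dimension $d \geq 2$ the rotational component of the differential cannot be read off from scalar ratios alone; it must be extracted from the conformal identifications $\psi_{\omega|_n} \circ \phi_{\omega|_n}^{-1}$ and shown to stabilize. Strong separation is essential here, since Lemma \ref{sep} guarantees that the gaps between level-$n$ cylinders are comparable to their diameters, enabling smooth interpolation across gaps while preserving H\"{o}lder regularity of the derivative. A cleaner alternative is to build $\Phi$ as the uniform limit of piecewise maps $\Phi_n$ defined on level-$n$ cylinders by $\psi_{\omega|_n} \circ \phi_{\omega|_n}^{-1}$ and interpolated smoothly across gaps, with exponential geometric equivalence forcing $C^{1+\alpha}$ convergence.
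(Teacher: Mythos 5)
Your forward direction is essentially the paper's argument: apply the mean value theorem (or H\"{o}lder continuity of $D\Phi$) to the images of cylinders, note that the two points at which the derivative is evaluated lie in a set of diameter $O(\lambda^n)$, and conclude $r_n/s_n = 1 + O(\lambda^{n\alpha})$. This part is correct. Your telescoping identity in the reverse direction is also sound, and combined with Lemma \ref{sep} it does yield H\"{o}lder continuity of the scalar $\ell = |D\Phi|$ along $J_S$; this is equivalent to the paper's cross-ratio argument with the quantities $a_n$ and the observation that $a_{N(\omega,\tau)} = 1$.

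However, the reverse direction as written is a plan, not a proof, and the gap you yourself flag is the genuine one: producing a coherent \emph{linear} differential at each point of $J_S$ (not merely its operator norm) and verifying the hypotheses of the Whitney extension theorem (Fact \ref{whit}). In dimension $d \geq 2$ conformality forces $D\Phi(x)$ to be a scalar multiple of an orthogonal matrix, and the orthogonal part is invisible to the ratio geometry; you propose to extract it from $\psi_{\omega|_n} \circ \phi_{\omega|_n}^{-1}$ and show it stabilizes with H\"{o}lder dependence on the base point, but you do not carry this out, and without it neither the Whitney compatibility condition
$$
\lim_{|x-y| \to 0} \frac{\|D\Phi(x) - D\Phi(y)\|}{|x-y|^{\alpha}} = 0
$$
nor even the first-order Taylor estimate $|\Phi(y) - \Phi(x) - D\Phi(x)(y-x)| = o(|x-y|)$ is established. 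The paper takes a different (componentwise) route here, defining each $\partial \Phi_i / \partial x_j$ as a limit of quotients of coordinate differences of diameter-realizing points and then checking the Whitney hypothesis directly, so you cannot simply defer to \enquote{a Whitney-type extension} as a black box. Your proposed alternative --- building $\Phi$ as a $C^{1+\alpha}$-convergent limit of the conformal identifications $\psi_{\omega|_n} \circ \phi_{\omega|_n}^{-1}$ interpolated across the gaps guaranteed by Lemma \ref{sep} --- is a reasonable and arguably more natural strategy, but it too is only sketched: you would need to prove that exponential equivalence of the ratio geometries controls the $C^{1+\alpha}$ distance between consecutive interpolants, which again requires controlling the rotational parts. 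As it stands, the proposal establishes one implication and identifies, but does not close, the central difficulty of the other.
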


\begin{proof}
We begin by fixing some notation. 
Let $ S = \{ \phi_e \}_{e \in E} $ and $ T = \{ \psi_e \}_{e \in E} $ be two CGDMS with limit sets
$$
J_S = \bigcap_{n \geq 1} \bigcup_{\omega \in \Sigma^n} \Delta_{\omega}, \qquad J_T = \bigcap_{n \geq 1} \bigcup_{\omega \in \Sigma^n} \Theta_{\omega}.
$$
We assume that the maps $ \phi_e $ all have Lipschitz constant $ \leq \lambda $, and $ \psi_e $ all have Lipschitz constant $ \leq \eta $ for some $ 0 < \lambda, \eta < 1 $.
From the exponential geometry assumption in Section \ref{conf}(f), the constant $ \lambda^- $ is nonzero, and $ \eta^- $ is defined similarly:
$$
\eta^- = \inf_{e \in E} \inf_{x \in X_{t(e)}} |D\psi_e(x)| > 0.
$$

First, assume that $ J_S $ and $ J_T $ are $C^{1+\alpha}$-equivalent. 
Then there exists $ \Phi : \mathbb{R}^d \rightarrow \mathbb{R}^d $ of class $ C^{1+\alpha} $ such that $ \Phi(J_T) = J_S $, so $ \Phi(\Theta_{\omega |_n}) = \Delta_{\omega |_n} $ for each $ \omega \in \Sigma $ and $ n \geq 1 $.
By the mean value theorem, there exist $ x \in \Theta_{\omega_1, \ldots, \omega_n} $ and $ y \in \Theta_{\omega_1, \ldots, \omega_{n-1}} $ such that
\begin{align*}
\text{diam}(\Delta_{\omega_1, \ldots, \omega_n}) &= |D\Phi(x)| \: \text{diam}(\Theta_{\omega_1, \ldots, \omega_n}), \text{ and } \\
\text{diam}(\Delta_{\omega_1, \ldots, \omega_{n-1}}) &= |D\Phi(y)| \: \text{diam}(\Theta_{\omega_1, \ldots, \omega_{n-1}}).
\end{align*}
Dividing these two equations yields
\begin{equation}
\label{rnsn}
\frac{r_n(\omega |_n)}{s_n(\omega |_n)} = \frac{|D\Phi(x)|}{|D\Phi(y)|}.
\end{equation}
Because $ \Phi $ is $ C^{1+\alpha} $ with derivative bounded away from zero, we may assume that $ \log |D\Phi| $ is H\"{o}lder, i.e. there exist $ C, \alpha > 0 $ such that for all $ x, y \in J_T $,
$$
\left|\log \frac{|D\Phi(x)|}{|D\Phi(y)|}\right| \leq C |x-y|^{\alpha}.
$$
In particular, this holds for $ x,y \in \Theta_{\omega_1, \ldots, \omega_{n-1}} $ from Equation \ref{rnsn}, so for this choice,
$$
\left|\log \frac{|D\Phi(x)|}{|D\Phi(y)|}\right| \leq C \lambda^{n \alpha}.
$$
Substituting this into Equation \ref{rnsn} we obtain
$$
e^{-C \lambda^{n \alpha}} \leq \frac{r_n(\omega |_n)}{s_n(\omega |_n)} \leq e^{C \lambda^{n \alpha}}.
$$
As $ n \rightarrow \infty $, the above ratio geometry quotient approaches $ 1 $ and thus $ J_S $ and $ J_T $ have equivalent geometries.
By Lemma \ref{aux}, there exists $ A, \delta >0 $ such that $ C \lambda^{n \alpha} \leq \log(1+Ae^{-n \delta}) $, and $ \log(1-A e^{-n \delta}) \leq -C \lambda^{n \alpha} $, which yields
$$
\left| \frac{r_n(\omega |_n)}{s_n(\omega |_n)} - 1 \right| \leq A e^{-n \delta},
$$
so the convergence is exponential.

Conversely, assume that $ J_S $ and $ J_T $ have exponentially equivalent geometries. 
Define $ \Phi : J_T \rightarrow J_S $ by $ \Phi = \pi_S \circ \pi_T^{-1} $.
Then $ \Phi(\Theta_{\omega_1, \ldots, \omega_n}) = \Delta_{\omega_1, \ldots, \omega_n} $ for each finite word $ (\omega_1, \ldots, \omega_n) \in \Sigma^n $.
We wish to extend $ \Phi $ to a $ C^{1+\alpha} $ map $ \mathbb{R}^d \rightarrow \mathbb{R}^d $.
To do this, we will use the following vector-valued version of the $ C^{1+\alpha} $ Whitney extension theorem (see \cite{Gal}).

\begin{fact}
\label{whit}
Let $ X \subset \mathbb{R}^d $ be a closed set, $ f : X \rightarrow \mathbb{R}^d $ a $ C^{1+\alpha} $ map, and $ g : X \rightarrow \text{GL}_d(X, \mathbb{R}^d) $ a function satisfying
$$
\lim_{|x-y| \rightarrow 0} \frac{\|g(x) - g(y)\|}{|x-y|^{\alpha}} = 0
$$
where $ \| \cdot \| $ is any of the equivalent matrix norms on $ \text{GL}_d(\mathbb{R}^d) $.
Then $ f $ extends to a $ C^{1+\alpha} $ map $ \mathbb{R}^d \rightarrow \mathbb{R}^d $, and $ Df = g $.
\end{fact}

Let $ x \in J_T $, so there exists a unique $ \omega \in \Sigma $ such that $ x = \pi_T(\omega) $.
In the usual coordinate system on $ \mathbb{R}^d $ we write $ x = (x_1, \ldots, x_d) $, and denote the component functions of $ \Phi $ by $ \{ \Phi_i(x_1, \ldots, x_d) \}_{i=1}^d $.
We define the partial derivatives $ \partial \Phi_i / \partial x_j $ at the point $ x $ as follows.
Let $ x_{\omega |_n}^{\pm} \in \mathbb{R}^d $ be the points for which
\begin{equation}
\label{xomega}
\text{diam}(\Theta_{\omega |_n}) = \left| x_{\omega |_n}^+ - x_{\omega |_n}^- \right|.
\end{equation}
For $ j=1, \ldots, d $, we denote the components of these points by $ x_{\omega |_n, j}^{\pm} $, and for each $ i=1, \ldots, d $, let $ y_{\omega |_n, i}^{\pm} $ be the points for which
\begin{equation}
\label{yomega}
\text{diam}(\Phi_i (\Theta_{\omega |_n}) ) = |y_{\omega |_n, i}^+ - y_{\omega |_n, i}^-|.
\end{equation}
In terms of these, define
\begin{equation}
\label{partial}
\frac{\partial \Phi_i}{\partial x_j}(x) = \lim_{n \to \infty} \frac{|y_{\omega |_n, i}^+ - y_{\omega |_n, i}^- |}{|x_{\omega |_n, j}^+ - x_{\omega |_n, j}^-|}
\end{equation}
for each $ j $ such that $ x_{\omega |_n, j}^+ \neq x_{\omega |_n, j}^- $; if $ x_{\omega |_n, j}^+ = x_{\omega |_n, j}^- $, we set $ \partial \Phi_i / \partial x_j (x) = 0 $.
Let $ D \Phi(x) \in GL_n(\mathbb{R}^d) $ be the Jacobian matrix of partial derivatives.

First, we will show that $ \Phi : J_T \rightarrow \mathbb{R}^d $, with derivative $ D \Phi $ so defined, is of class $ C^{1+\alpha} $.
By the mean value theorem,
$$
|D\Phi(x)| = \lim_{n \to \infty} \frac{\text{diam}(\Delta_{\omega |_n})}{\text{diam}(\Theta_{\omega |_n})}.
$$
Now let $ x \neq y \in J_T $, so that $ x = \pi_T(\omega) $ and $ y = \pi_T(\tau) $ for some $ \omega \neq \tau \in \Sigma $, say.
In terms of these, define
$$
a_n = \cfrac{\cfrac{\text{diam}(\Delta_{\omega |_n})}{\text{diam}(\Theta_{\omega |_n})}}{\cfrac{\text{diam}(\Delta_{\tau |_n})}{\text{diam}(\Theta_{\tau |_n})}},
$$
so that $ \displaystyle \lim_{n \to \infty} a_n = \frac{|D\Phi(x)|}{|D\Phi(y)|} $.
The sequence $ a_n $ and the ratio geometry quotients satisfy the following cross-ratio:
\begin{equation}
\label{crossrat}
a_{n-1} \: \frac{r_n(\omega |_n)}{s_n(\omega |_n)} = \frac{r_n(\tau |_n)}{s_n(\tau |_n)} \: a_n
\end{equation}
Because we assumed the ratio geometries are exponentially equivalent, there exist constants $ A, \delta > 0 $ such that for all $ n \geq 0 $, the quotients $ r_n(\omega |_n) / s_n(\omega |_n) $ and $ r_n(\tau |_n) / s_n(\tau |_n) $ both lie in the interval $ \displaystyle [ 1-Ae^{-n \delta}, 1+Ae^{-n \delta} ] $.
By Lemma \ref{aux}, there exist constants $ 0 < t < 1 $ and $ C > 0 $ such these two quotients both lie in the interval $ \displaystyle [ e^{-(C/2)t^n}, e^{(C/2)t^n} ] $.
Dividing these two inequalities yields
$$
e^{-C t^n} \leq \cfrac{ \cfrac{r_n(\tau |_n)}{s_n(\tau |_n)} }{ \cfrac{r_n(\omega |_n)}{s_n(\omega |_n)} } \leq e^{C t^n},
$$
which by Equation \ref{crossrat} implies
$$
e^{-C t^n} \leq \frac{a_n}{a_{n-1}} \leq e^{C t^n}.
$$
Via an inductive and geometric series argument (and renaming the constant $ C $), we can improve this to
\begin{equation}
\label{anm}
e^{-C t^n} \leq \frac{a_{n+m}}{a_n} \leq e^{C t^n}
\end{equation}
for any $ n,m \geq 1 $.

Now fix $ N = N(\omega, \tau) $, the length of the longest common initial subword of $ \omega $ and $ \tau $ as defined in Equation \ref{N}.
Notice that $ a_N = 1 $.
Substituting $ N = n $ and taking $ m \to \infty $ in Equation \ref{anm}, we obtain
\begin{equation}
\label{eqa}
\left| \log \frac{|D\Phi(x)|}{|D\Phi(y)|} \right| \leq C t^N
\end{equation}
Take $ n > N $. 
By the strong separation assumption and Lemma \ref{sep}, we know that $ x \in \Theta_{\omega |_n} $ and $ y \in \Theta_{\tau |_n} $ are separated by at least $ a (\lambda^-)^N $.
Take $ 0 < \alpha < 1 $ so that $ t \leq (\lambda^-)^{\alpha} $.
Then Equation \ref{eqa} reads
$$
\left| \log \frac{|D\Phi(x)|}{|D\Phi(y)|} \right| \leq C a^{-\alpha} |x-y|^{\alpha},
$$ 
so $ \log |D\Phi| $ is H\"{o}lder continuous, and so is $ |D\Phi| $.

It remains to show that $ D\Phi $ satisfies the second hypothesis of Fact \ref{whit}, namely that
$$
\lim_{|x-y| \rightarrow 0} \frac{\|D\Phi(x) - D\Phi(y)\|}{|x-y|^{\alpha}} = 0.
$$
We begin with two inequalities, which follow from Equations \ref{xomega} and \ref{yomega}.
\begin{align*}
\frac{1}{\sqrt{d}} \: \text{diam}(\Theta_{\omega |_n}) &\leq \max_{1 \leq j \leq d} |x_{\omega |_n, j}^+ - x_{\omega |_n, j}^-| \leq \text{diam}(\Theta_{\omega |_n}), \\
\frac{1}{\sqrt{d}} \: \text{diam}(\Delta_{\omega |_n}) &\leq \max_{1 \leq i \leq d} |y_{\omega |_n, i}^+ - y_{\omega |_n, i}^-| \leq \text{diam}(\Delta_{\omega |_n}).
\end{align*}
Dividing these two equations, taking $ n \to \infty $ and using our geometric equivalence assumption and the definition in Equation \ref{partial}, we obtain
$$
\frac{1}{\sqrt{d}} \leq \max_{1 \leq i,j \leq d} \frac{\partial \Phi_i}{\partial x_j} \leq \sqrt{d}.
$$
We have an analogous equation for $ y = \pi_T(\tau) $. 
Taking $ \| \cdot \| $ as the usual max norm on $ \text{GL}_d(\mathbb{R}^d) $, this implies
$$
\| D\Phi(x) - D\Phi(y) \| \leq \frac{d-1}{\sqrt{d}},
$$
which concludes the proof.
\end{proof}

\section{Pressure and the scaling function}
\label{pres}
In this section we will introduce the pressure, and relate it to the scaling function, proving the second theorem from the introduction.
In the conclusion of this section, we will mention some applications to the Hausdorff dimension of limit sets of similarity mappings.
We begin by collecting some known facts about the pressure.
If $ S = \{ \phi_e : X_{t(e)} \rightarrow X_{i(e)} \}_{e \in E} $ is a CGDMS determining sets $ \Delta_{\omega |_n} $ for each $ \omega \in \Sigma $ and $ n \geq 1 $, we define the topological pressure $ P: [0,\infty) \rightarrow \mathbb{R} $ by
\begin{equation}
\label{Pt}
P(t) = \lim_{n \to \infty} \frac{1}{n} \log \sum_{\omega \in \Sigma^n} \| D\phi_{\omega} \|^t.
\end{equation}
The limit exists because the sequence $ a_n = \sum_{\omega \in \Sigma^n} \|D\phi_{\omega}\|^t $ is subadditive for each $ t \geq 0 $. 
Let $ F = \{ t \geq 0 : P(t) < \infty \} $ be the set of finiteness of $ P $, and $ \theta = \inf F $.
If we assume finite primitivity (assumption (e) from Section \ref{conf}), then $ P $ has the following properties.

\begin{fact}[Proposition 4.2.8 from \cite{MU2}]
The topological pressure $ P(t) $ of a finitely primitive CGDMS is non-increasing on $ [0,\infty) $, strictly decreasing to $ -\infty $ on $ [\theta, \infty) $, and is convex and continuous on $ F $.
\end{fact}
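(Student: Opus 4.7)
The plan is to verify the four properties in sequence, building on the partition functions $ S_n(t) = \sum_{\omega \in \Sigma^n} \|D\phi_\omega\|^t $, the uniform contraction $ \|D\phi_\omega\| \leq \lambda^{|\omega|} $, Hölder's inequality, and finally finite primitivity to handle the boundary of $ F $.

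First, for monotonicity, strict decrease, and the limit at infinity, the chain rule and the Lipschitz bound give $ \|D\phi_\omega\| \leq \lambda^n $ for each $ \omega \in \Sigma^n $, so that for $ 0 \leq t_1 \leq t_2 $,
$$
S_n(t_2) = \sum_{\omega \in \Sigma^n} \|D\phi_\omega\|^{t_1}\,\|D\phi_\omega\|^{t_2-t_1} \leq \lambda^{n(t_2-t_1)}\, S_n(t_1).
$$
Taking $\tfrac{1}{n}\log$ and letting $ n \to \infty $ yields the fundamental estimate $ P(t_2) \leq (t_2 - t_1)\log \lambda + P(t_1) $. Since $ \log \lambda < 0 $, this produces monotonicity throughout $ [0,\infty) $, strict decrease wherever the right side is finite (which covers $ [\theta, \infty) $ after choosing $ t_1 $ slightly above $ \theta $ if necessary), and $ P(t) \to -\infty $ as $ t \to \infty $.

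For convexity on $ F $, I would apply Hölder's inequality to $ S_n(t) $ with $ t = (1-s)t_1 + s t_2 $ and conjugate exponents $ 1/(1-s), 1/s $, obtaining $ S_n(t) \leq S_n(t_1)^{1-s} S_n(t_2)^{s} $; taking $\tfrac{1}{n}\log$ and passing to the limit gives $ P(t) \leq (1-s)P(t_1) + s P(t_2) $. Continuity on the interior of $ F $ is then automatic, as any finite convex function on a real interval is continuous on its interior.

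The hard part, and the only place finite primitivity is essential, is continuity at the left endpoint $ \theta $ when $ \theta \in F $. Sub-multiplicativity $ S_{n+m}(t) \leq S_n(t) S_m(t) $ (inherited from $ \|D\phi_{\omega'\omega''}\| \leq \|D\phi_{\omega'}\|\cdot\|D\phi_{\omega''}\| $) together with Fekete's lemma yields $ P(t) = \inf_n \tfrac{1}{n}\log S_n(t) $, so that $ P $ is upper semicontinuous on $ F $ as an infimum of continuous functions. For matching lower semicontinuity I would use finite primitivity: for any $ e_1, e_2 \in E $ there is a bridging word in $ \Lambda $ of length $ n_0 $ connecting them admissibly, and Proposition \ref{bddist} makes compositions multiplicative up to a uniform constant, producing a super-multiplicative bound $ S_{m+n+n_0}(t) \geq c(t)\, S_m(t) S_n(t) $. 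Fekete's lemma applied to the super-multiplicative sequence $ c(t) S_n(t) $ then realizes $ P(t) $ as a supremum of continuous functions, hence lower semicontinuous; combining with the upper semicontinuity above delivers continuity on all of $ F $, including at $ \theta $. The technical obstacle is verifying that the constant $ c(t) $ in the super-multiplicative bound depends continuously (and locally uniformly) on $ t \in F $, which is where the conformality hypotheses (c), (d) and the bounded distortion estimate combine most delicately.
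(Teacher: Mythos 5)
The paper offers no proof of this statement: it is imported verbatim as Proposition 4.2.8 of \cite{MU2}, so there is nothing internal to compare your argument against. That said, your outline is essentially the standard proof (and essentially the one in \cite{MU2}), and it is sound. The estimate $ S_n(t_2) \leq \lambda^{n(t_2-t_1)} S_n(t_1) $ correctly delivers monotonicity, strict decrease on the finiteness set, and $ P(t) \to -\infty $; H\"{o}lder gives log-convexity of $ S_n $ and hence convexity of $ P $; and you correctly isolate the only nontrivial point, continuity at $ \theta $ when $ \theta \in F $, resolving it by playing upper semicontinuity (from submultiplicativity and Fekete, $ P = \inf_n \frac{1}{n}\log S_n $) against lower semicontinuity (from the finite-primitivity supermultiplicative bound and Fekete again, $ P $ as a supremum). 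Two remarks. First, the ``technical obstacle'' you flag at the end is not one: the bridging constant has the form $ c(t) = D^t $ with $ D = e^{-cK}\min_{\gamma \in \Lambda} \|D\phi_\gamma\| > 0 $ a single constant ($ \Lambda $ is finite and bounded distortion gives $ \inf |D\phi_\gamma| \geq e^{-K}\|D\phi_\gamma\| $), so its continuity in $ t $ is immediate. Second, a point you pass over that does need a sentence: realizing $ P $ as a supremum of \emph{continuous} functions requires each $ \frac{1}{n}\log S_n(t) $ to be finite and continuous on all of $ F $ including at $ \theta $; for an infinite alphabet this follows from $ S_n(t) \leq S_1(t)^n $, the identification $ F = \{t : S_1(t) < \infty\} $ (which itself uses the supermultiplicative bound to show $ S_1(t) = \infty $ forces $ P(t) = \infty $), and dominated convergence for the series $ \sum_e \|D\phi_e\|^t $ at $ t = \theta $. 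You should also note, or cite, that $ F \neq \emptyset $ (the open set condition forces $ \theta \leq d $), since your proof of $ P(t) \to -\infty $ needs some finite anchor $ t_1 $.
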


We are now in a position to prove the second theorem from the introduction, which we restate below.

\begin{thmx}
Let $ S $ be a finitely primitive CGDMS, with scaling function $ r : \Sigma \rightarrow (0,1) $ and pressure $ P $. 
Then
$$
P(t) = \lim_{n \to \infty} \frac{1}{n} \log \sum_{\omega \in \widetilde{\Sigma}^n} \prod_{k=0}^{n-1} r( \ldots, \omega_n, \ldots, \omega_{k+1})^t.
$$
\end{thmx}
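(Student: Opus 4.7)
My plan is to show that for each $\omega \in \Sigma^n = \widetilde{\Sigma}^n$, the derivative norm $\|D\phi_\omega\|$ appearing in the definition of $P(t)$ and the scaling product $\prod_{k=0}^{n-1} r(\ldots, \omega_n, \ldots, \omega_{k+1})$ differ by a multiplicative factor bounded above and below by constants independent of $n$ and $\omega$. Since $\frac{1}{n}\log$ of such a uniform constant tends to zero, this termwise comparison, applied inside the sums, immediately yields the claimed equality of limits.

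First, I would compare $\|D\phi_\omega\|$ to $\text{diam}(\Delta_\omega)$. Running the argument in the proof of Proposition \ref{bddist} with no ``past'' word (equivalently, on the entire domain $X_{t(\omega_n)}$) bounds the distortion of $\phi_\omega$ by $e^K$, and the mean value theorem together with conformality and convexity of $X_{t(\omega_n)}$ yields
$$
\text{diam}(\Delta_\omega) = |D\phi_\omega(x_\omega)| \cdot \text{diam}(X_{t(\omega_n)})
$$
for some $x_\omega \in X_{t(\omega_n)}$. Since $V$ is finite, the diameters $\text{diam}(X_v)$ are uniformly bounded above and below, so $\|D\phi_\omega\|$ and $\text{diam}(\Delta_\omega)$ are comparable with constants independent of $n$ and $\omega$.

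Next, I would telescope the diameter directly from the definition of the ratio geometry sequence:
$$
\text{diam}(\Delta_{\omega_n, \ldots, \omega_1}) = \text{diam}(X_{t(\omega_n)}) \prod_{k=1}^{n} r_k(\omega_n, \ldots, \omega_{n-k+1}).
$$
Then I would replace each finite-level $r_k$ with the limiting scaling function $r$. Finite primitivity provides an admissible left-infinite extension $\tilde\omega^{(k)} \in \widetilde{\Sigma}$ of the finite dual word $(\omega_n, \ldots, \omega_{n-k+1})$, and taking $m \to \infty$ in Proposition \ref{dualrat} bounds $|\log r_k - \log r(\tilde\omega^{(k)})|$ by $K\lambda^{\alpha k}$. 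By the H\"{o}lder continuity of $r$, any two admissible extensions agreeing on their last $k$ letters give values of $\log r$ differing by at most a constant times $\lambda^{\alpha k}$, so the expression $r(\ldots, \omega_n, \ldots, \omega_{n-k+1})$ is well-defined up to this multiplicative error. The total error accumulated over $k = 1, \ldots, n$ is controlled by the convergent geometric series $\sum_{k=1}^{\infty} K \lambda^{\alpha k}$, which is uniform in $n$ and $\omega$.

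Chaining these three comparisons yields constants $C_1, C_2 > 0$ independent of $n$ and $\omega$ with
$$
C_1 \prod_{k=0}^{n-1} r(\ldots, \omega_n, \ldots, \omega_{k+1})^t \leq \|D\phi_\omega\|^t \leq C_2 \prod_{k=0}^{n-1} r(\ldots, \omega_n, \ldots, \omega_{k+1})^t,
$$
after which summing over $\omega \in \widetilde{\Sigma}^n$, applying $\frac{1}{n}\log$, and letting $n \to \infty$ completes the proof. The main obstacle is the third step: one must verify that the notation $r(\ldots, \omega_n, \ldots, \omega_{k+1})$ is unambiguous (which requires finite primitivity for existence of extensions and H\"{o}lder continuity for choice-independence), and that the errors from replacing $r_k$ by $r$ remain summable uniformly in the word. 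The exponential rate $\lambda^{\alpha k}$ from Proposition \ref{dualrat} is precisely what makes this possible.
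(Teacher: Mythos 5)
Your proposal is correct and follows essentially the same route as the paper: compare $\|D\phi_\omega\|$ with $\operatorname{diam}(\Delta_\omega)$ (the paper cites this from \cite{MU2} rather than rederiving it from bounded distortion), telescope the diameter into the ratio geometry product, replace each $r_{n-k}$ by the limiting scaling function via Proposition \ref{dualrat} with $m \to \infty$, and absorb the accumulated error into a uniform constant by the geometric series $\sum_k \lambda^{\alpha k}$. Your added remarks on the well-definedness of $r(\ldots, \omega_n, \ldots, \omega_{k+1})$ (existence of admissible left-infinite extensions and choice-independence up to the same exponential error) make explicit a point the paper leaves implicit, but the argument is otherwise the same.
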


\begin{proof}
Let $ p_n(t) = \sum_{\omega \in \Sigma^n} \| D\phi_{\omega} \|^t $.
Because $ \Sigma^n = \widetilde{\Sigma}^n $ for any $ n \geq 1 $, we have $ p_n(t) = \sum_{\omega \in \widetilde{\Sigma}^n} \| D\phi_{\omega} \|^t $.
We will require the following fact.

\begin{fact}[Equations 4.20 and 4.23 from \cite{MU2}]
There exists a constant $ K>0 $ such that for all finite words $ \omega \in \Sigma^n $,
$$
K^{-1} \leq \frac{\text{diam}(\Delta_{\omega})}{\|D \phi_{\omega} \|} \leq K.
$$
\end{fact}

From this, we obtain
\begin{equation}
\label{eqb}
K^{-1} \leq \frac{p_n(t)}{\sum_{\omega \in \widetilde{\Sigma}^n} \text{diam}(\Delta_{\omega})^t} \leq K.
\end{equation}
We can relate this to the ratio geometry sequence on the dual, from Equation \ref{ratgeo}:
$$
\sum_{\omega \in \widetilde{\Sigma}^n} \text{diam}(\Delta_{\omega})^t = \sum_{\omega \in \widetilde{\Sigma}^n} \prod_{k=0}^{n-1} r_{n-k}(\omega_n, \ldots, \omega_{k+1})^t.
$$
By Proposition \ref{dualrat}, there exists a constant $ A > 0 $ such that
$$
\left| \log \frac{r_{n+m-k}(\omega_{n+m}, \ldots, \omega_{k+1}}{r_{n-k}(\omega_n, \ldots, \omega_{k+1})} \right| \leq A \lambda^{(n-k) \alpha}.
$$
Taking $ m \rightarrow \infty $ and substituting into Equation \ref{eqb} gives the bounds
$$
K^{-1} p_n^-(t) \leq p_n(t) \leq K p_n^+(t),
$$
where
$$
p_n^{\pm}(t) = \sum_{\omega \in \widetilde{\Sigma}^n} \prod_{k=0}^{n-1} r(\ldots, \omega_n, \ldots, \omega_{k+1})^t e^{\pm t A\lambda^{(n-k)\alpha}}.
$$
By a geometric series argument, $ \prod_{k=0}^{n-1} e^{t A \lambda^{(n-k)\alpha}} \leq B^t $ for some constant $ B > 0 $ independant of $ n $.
The bounds then improve to
$$
K^{-1} B^{-t} \leq \frac{p_n(t)}{\sum_{\omega \in \widetilde{\Sigma}^n} \prod_{k=0}^{n-1} r(\ldots, \omega_n, \ldots, \omega_{k+1})^t} \leq K B^t.
$$
Taking log, dividing by $ n $, and taking $ n \to \infty $ yields the claim.
\end{proof}

\subsection{Applications to dimension theory of limit sets}
The pressure of a CGDMS is related to the dimension of its limit set by the following generalization of Bowen's formula.

\begin{fact}[Theorem 4.2.13 from \cite{MU2}]
\label{bow} Let $ S $ be a finitely primitive CGDMS, $ J $ its limit set, and $ \text{dim}_H $ the Hausdorff dimension.
Then
$$
\text{dim}_H(J) = \inf \{ t \geq 0 : P(t) < 0\},
$$
and if $ P(t_{\ast}) = 0 $, then $ t_{\ast} $ is the unique zero of $ P $ and $ t_{\ast} = \text{dim}_H(J) $.
\end{fact}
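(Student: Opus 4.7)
The plan is to bracket $\dim_H(J)$ between $t^*$ above and below (writing $t^* := \inf\{t \geq 0 : P(t) < 0\}$) and to deduce the uniqueness assertion directly from the strict monotonicity of $P$ stated in Fact~4.2. Indeed, because $P$ is strictly decreasing on $[\theta, \infty)$ and continuous on its finiteness set, it can vanish at most once; if $P(t^*) = 0$ then monotonicity forces that zero to coincide with $\inf\{t : P(t) < 0\}$, so both conclusions of the theorem reduce to the single equality $\dim_H(J) = t^*$.

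For the upper bound $\dim_H(J) \leq t^*$, fix any $t > t^*$; by Fact~4.2 we have $P(t) < 0$, so the partition functions $\sum_{\omega \in \Sigma^n} \|D\phi_\omega\|^t$ decay exponentially in $n$. The comparison $\text{diam}(\Delta_\omega) \leq K \|D\phi_\omega\|$ already invoked in the proof of Theorem~B converts this into $\sum_{\omega \in \Sigma^n} \text{diam}(\Delta_\omega)^t \to 0$. Since $\{\Delta_\omega : \omega \in \Sigma^n\}$ is a cover of $J$ whose mesh is at most $\lambda^n \max_v \text{diam}(X_v) \to 0$ by Equation~\ref{contract}, this forces $\mathcal{H}^t(J) = 0$, whence $\dim_H(J) \leq t$; letting $t \searrow t^*$ gives the upper bound.

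For the matching lower bound $\dim_H(J) \geq t^*$, the standard route is to produce a $t^*$-conformal probability measure $\mu$ supported on $J$, i.e.\ one whose transformation under the generators obeys the Jacobian identity $\mu(\phi_e(A)) = \int_A |D\phi_e|^{t^*}\, d\mu$. One then applies a Perron--Frobenius argument to the transfer operator
$$
(\mathcal{L}_t f)(\omega) = \sum_{e : A_{e\omega_1}=1} |D\phi_e(\pi(\omega))|^t f(e\omega)
$$
at $t = t^*$, using finite primitivity to compensate for the infinite alphabet and produce a positive leading eigenmeasure. Bounded distortion upgrades the Jacobian relation to the uniform two-sided estimate $\mu(\Delta_\omega) \asymp \text{diam}(\Delta_\omega)^{t^*}$ for all $\omega \in \Sigma^n$, and the mass distribution principle then forces $\dim_H(J) \geq t^*$.

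The main obstacle is undoubtedly the construction of the conformal measure $\mu$. In the finite-alphabet setting this is classical Bowen--Ruelle thermodynamic formalism, but for a general CGDMS one must carry out a careful spectral analysis of $\mathcal{L}_t$ on an appropriate Banach space of H\"older functions over $\Sigma$. Finite primitivity substitutes for the spectral gap that is automatic in the finite-alphabet case, supplying the irreducibility and quasi-compactness needed to extract a strictly positive eigenvector and eigenmeasure; with these in hand the lower bound follows routinely. The upper bound, by contrast, is essentially a direct consequence of bounded distortion and the definition of pressure, and the uniqueness assertion is a formal corollary of Fact~4.2.
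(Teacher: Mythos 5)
The paper does not prove this statement at all: it is imported verbatim as Theorem 4.2.13 of \cite{MU2} and labelled as a Fact, so there is no internal proof to compare against. Judged on its own terms, your outline is the standard thermodynamic-formalism route, and the upper bound is essentially complete: for $t > t^{\ast}$ monotonicity of $P$ gives $P(t)<0$, the comparison $\mathrm{diam}(\Delta_{\omega}) \asymp \|D\phi_{\omega}\|$ turns exponential decay of the partition sums into $\sum_{\omega \in \Sigma^n} \mathrm{diam}(\Delta_{\omega})^t \to 0$, and the $\Delta_{\omega}$ form countable covers of $J$ of vanishing mesh, so $\mathcal{H}^t(J)=0$. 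The uniqueness assertion does reduce to the strict monotonicity in the quoted Fact on the pressure.

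The one genuine gap is in the lower bound. Your argument constructs a $t^{\ast}$-conformal measure and applies the mass distribution principle, but for an infinite alphabet such a measure at the critical exponent need not exist: since $E$ is countable, $P$ can be $+\infty$ on $[0,\theta)$ and already strictly negative at $\theta$, so that $P$ has no zero and $t^{\ast}=\theta$ is not a parameter at which the transfer operator $\mathcal{L}_{t^{\ast}}$ has a normalized eigenmeasure. This is precisely why the theorem is phrased with the conditional clause ``if $P(t_{\ast})=0$.'' The proof in \cite{MU2} closes this gap by exhausting $E$ by finite subalphabets $F$, applying the classical finite-alphabet Bowen formula (where your conformal-measure argument is valid) to each subsystem $J_F$, and using the identity $P(t)=\sup_F P_F(t)$ together with $\dim_H(J)=\sup_F \dim_H(J_F)$ to pass to the limit; finite primitivity is what guarantees each finite subsystem can be chosen irreducible. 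Your sketch as written establishes the lower bound only in the case $P(t^{\ast})=0$, so you should either add the finite-subsystem approximation or restrict the claim accordingly.
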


In light of this result and Theorem B, the Hausdorff dimension of the limit set $ J $ can be computed entirely in terms of the scaling function on the dual $ \widetilde{\Sigma} $.
And in the cases where the maps are similarities (i.e. their derivatives are constants), we now show that Theorem B reduces to several well-known formulas for the Hausdorff dimension of Cantor sets.

\subsubsection{Iterated similarity maps}
We take the vertex set a single point $ X_v = X \subset \mathbb{R}^d $, and the maps $ \phi_e : X \rightarrow X $ are
$$
\phi_e (x) = \lambda_e A \circ I + b,
$$
where $ \lambda_e > 0 $ is a positive scalar, $ A $ is a linear isometry in $ \mathbb{R}^d $, $ I $ is either an inversion with respect to a fixed sphere $ S^{d-1} $ of a given radius and center, and $ b \in \mathbb{R}^d $ is any vector.
The constant $ \lambda_e $ is called the \textit{similarity coefficient} of the map $ \phi_e $.
For any point $ x \in X_{t(e)} $ we have $ |D\phi_e(x)| = \lambda_e $, and thus $ \text{diam} (\phi_e(X_{t(e)})) = \lambda_e \text{diam}(X_{t(e)}) $.
Then for any dual word $ (\ldots, \omega_2, \omega_1) \in \widetilde{\Sigma} $, the scaling function depends only on the ``first" letter:
$$
r(\ldots, \omega_2, \omega_1) = \lambda_{\omega_1}.
$$
Substituting this expression into the expression for the pressure derived in Theorem B yields
$$
P(t) = \lim_{n \to \infty} \frac{1}{n} \log \sum_{\omega \in \Sigma^n} \lambda_{\omega_1}^t \cdots \lambda_{\omega_n}^t = \lim_{n \to \infty} \frac{1}{n} \log \left( \sum_{e \in E} \lambda_e^t \right)^n = \log \sum_{e \in E} \lambda_e^t.
$$
Thus by Fact \ref{bow}, the Hausdorff dimension of the limit set $ J $ is $ \inf \{ t \geq 0 : \sum_{e \in E} \lambda_e^t < 1 \} $, which is Corollary 3.17 from \cite{MU1}.
And if the alphabet $ E $ is finite, say $ E = \{1, \ldots, p\} $, then the Hausdorff dimension is $ t_{\ast} $ satisfies $ \sum_{i=1}^p \lambda_i^{t_{\ast}} = 1 $, which is Moran's fundamental formula \cite{Pes}.

\subsubsection{Graph directed constructions}
The limit sets of similarity maps composed along a directed graph has a well-developed dimension theory initiated in \cite{MW} (see also \cite{Pes}).
We summarize their work below, and show that the dimension formula for the limit set obtained there is a special case of our Theorem B, together with Fact \ref{bow}.

Let $ J_1, \ldots, J_p $ be compact nonoverlapping subsets of $ \mathbb{R}^d $ with nonempty interior.
Consider a connected directed graph $ G $ on the vertex set $ \{1, \ldots, p\} $, and for each $ (i,j) \in G $, a similarity map $ T_{i,j} : J_j \rightarrow J_i $ with similarity coefficient $ \lambda_{i,j} $.
Such a system is called a \textit{graph directed construction}.
With some natural separation conditions, this system can be shown to have an invariant Cantor set $ K = \bigcup_i K_i $, and each $ K_i $ is self-similar in the sense that 
$$
K_i = \bigcup_{(i,j) \in G} T_{i,j}(K_j).
$$
We are primarily interested in the Hausdorff dimension of $ K $.

Such a directed graph is a special case of the multigraphs we have considered in this paper, but in this case, there are no distinct edges with the same initial and terminal vertices.
Considered as a multigraph $ (V, E) $, we have $ V = \{ 1, \ldots, p \} $ and $ E = \{ (i,j) \in G \} $ so $ E \subset V \times V $ in this case.
Because the maps are similarities, the scaling function on the dual only depends on the ``first'' letter (as above), so
$$
r(\ldots, (i_3, i_2), (i_2, i_1)) = \lambda_{i_2, i_1}
$$
for each dual word.
Substituting into the pressure formula in Theorem B yields
$$
P(t) = \lim_{n \to \infty} \frac{1}{n} \log \sum_{\omega \in \Sigma^n} \prod_{i=1}^p \lambda_{\omega_i, \omega_{i+1}}^t.
$$
Let $ A_t $ be the $ p \times p $ matrix with entries $ \lambda_{i,j}^t $, the ``construction matrix'' of the graph-directed construction in the terminology of \cite{MW}.
This matrix is irreducible because the graph is connected, and thus its spectral radius $ \Phi(t) $ is positive by the Perron-Frobenius theorem.
Let $ \| \cdot \|_1 $ be the $ l_1 $ norm on $ \mathbb{R}^d $, and $ u = (1, \ldots, 1) \in \mathbb{R}^d $.
By induction on $ n \geq 1 $,
$$
\| A_t^n u \|_1 = \sum_{\omega \in \Sigma^n} \prod_{i=1}^p \lambda_{\omega_i, \omega_{i+1}}^t.
$$
Taking the entrywise matrix norm $ \| \cdot \| = \| \cdot u \|_1 $ we thus have that $ P(t) = \log \Phi(t) $ by Gelfand's theorem.
Because the alphabet is finite, by Theorem B and Fact \ref{bow} we have that the Hausdorff dimension $ t_{\ast} $ of the limit set $ K $ is the unique solution to $ \Phi(t_{\ast}) = 1 $, which is the fundamental result of \cite{MW}.

\end{document}